\newtheorem{theorem}{Theorem}[section]
\newtheorem{lemma}[theorem]{Lemma}
\newtheorem{corollary}[theorem]{Corollary}
\newtheorem{claim}[theorem]{Claim}
\newtheorem{conjecture}[theorem]{Conjecture}
\title{Improvements on Hippchen's Conjecture}
\author{Eun-Kyung Cho\thanks{
Department of Mathematics, Hankuk University of Foreign Studies, Yongin-si, Gyeonggi-do, Republic of Korea.
 \texttt{ekcho2020@gmail.com}
}
\and Ilkyoo Choi\thanks{
Department of Mathematics, Hankuk University of Foreign Studies, Yongin-si, Gyeonggi-do, Republic of Korea.
\texttt{ilkyoo@hufs.ac.kr}
}
\and Boram Park\thanks{
Department of Mathematics, Ajou University, Suwon-si, Gyeonggi-do, Republic of Korea.
\texttt{borampark@ajou.ac.kr}
}}
\date\today
\begin{document}

\maketitle

\begin{abstract}
Let $G$ be a $k$-connected graph on $n$ vertices. 
Hippchen's Conjecture~\cite{hippchen2008intersections} states that two longest paths in $G$ share at least $k$ vertices. 
Guti\'errez~\cite{gutierrez2020intersection} recently proved the conjecture when $k\leq 4$ or $k\geq \frac{n-2}{3}$. 
We improve upon both results; namely, we show that two longest paths in $G$ share at least $k$ vertices when $k=5$ or $k\geq \frac{n+2}{5}$. 
This completely resolves two conjectures in~\cite{gutierrez2020intersection} in the affirmative.
\end{abstract}

\section{Introduction}\label{section-introduction}

It is easy to see that two longest paths in a connected graph must have a common vertex. 
In 1966, Gallai~\cite{MR0232693} asked if all longest paths in a connected graph must have a common vertex. 
This is known to be true for connected series-parallel graphs~\cite{2017ChEhFeHeShYaYa}, yet, it is not true in general. 
The minimum order of a graph answering Gallai's question in the negative has 12 vertices, found by Walther and Voss~\cite{walther1974kreise} and independently by~Zamfirescu~\cite{1976Zamfirescu}. 
There is a rich literature regarding the history of intersections of longest paths.
We direct the readers to the surveys~\cite{2001Zamfirescu,2013ShZaZa} for more details and related results. 

Instead of ensuring a common vertex in all (or many) longest paths, one could seek sufficient conditions that imply many common vertices in two longest paths. 
Hippchen~\cite{hippchen2008intersections} made the following intriguing conjecture, suggesting that the number of vertices shared by two longest paths in a graph $G$ is  at least the connectivity of $G$.
Note that the complete bipartite graph where the two parts have $k$ and $2k+2$ vertices demonstrates that the conjecture, if true, is tight. 
There is also an infinite family of graphs for each $k$ that demonstrates the tightness of the conjecture~\cite{gutierrez2020intersection}.

\begin{conjecture}[\cite{hippchen2008intersections}]\label{conj}
If $G$ is a $k$-connected graph, then two longest paths in $G$ share at least $k$ vertices.
\end{conjecture}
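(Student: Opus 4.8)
The natural line of attack is an \emph{exchange argument} generalizing the elementary proof that two longest paths in a $2$-connected graph meet in at least two vertices. Suppose toward a contradiction that $P$ and $Q$ are longest paths in the $k$-connected graph $G$ with $|V(P)\cap V(Q)| = t \le k-1$; among all such counterexamples we may take one for which $|V(G)|$ is minimum. Write $S = V(P)\cap V(Q)$ and $\ell = |E(P)| = |E(Q)|$. Since $|S| \le k-1 < k$, the graph $G-S$ is connected; more is true and will be needed: by Menger's theorem any two vertex sets of $G-S$ are joined by a family of internally disjoint paths of size at least $k-|S|\ge k-t\ge 1$, once we account for the at most $t$ paths that a minimum separator could ``spend'' inside $S$. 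So there is always at least one unit of spare connectivity with which to route a bridge around an obstructing vertex.

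First I would record the segment decomposition. Orient $P$ and $Q$. The vertices of $S$ occur along $P$ in some linear order and along $Q$ in a generally different linear order; the first order cuts $P$ into segments $P^{(0)},P^{(1)},\dots,P^{(t)}$ (two end segments and $t-1$ internal ones), and the second cuts $Q$ into $Q^{(0)},\dots,Q^{(t)}$. The crucial structural observation is that the interiors of all $2t+2$ of these segments are pairwise disjoint and disjoint from $S$: the interior of a $P$-segment lies in $V(P)\setminus S = V(P)\setminus V(Q)$, which is disjoint from $V(Q)$, and symmetrically. It follows immediately that any chain obtained by concatenating a selection of $P$- and $Q$-segments (or subpaths thereof) through distinct vertices of $S$ is automatically a \emph{path} of $G$, whose length equals the sum of the lengths of the chosen pieces. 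The whole problem is thus reduced to showing that, whenever $t\le k-1$, one can assemble such a chain of length strictly greater than $\ell$, contradicting the maximality of $P$ and $Q$.

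The heart of the argument would be a counting step forcing a profitable exchange. For each $s\in S$, splicing the portion of $P$ lying to one side of $s$ onto the longer portion of $Q$ at $s$ produces a legal chain; bounding its length by $\ell$ yields an inequality relating the position of $s$ in the two orders to the lengths of the relevant end segments, and in particular forces the four end segments $P^{(0)},P^{(t)},Q^{(0)},Q^{(t)}$ to have length at most $\ell/2$. Summing these inequalities over all $s$ and over both ends, and combining with $\sum_i|E(P^{(i)})| = \sum_i|E(Q^{(i)})| = \ell$, one wants to conclude that some internal $P$-segment is ``long where $Q$ is short'' (or vice versa), and then to use the spare connectivity $k-t\ge 1$ to bridge, inside $G-S$, between the relevant pieces lying in possibly different components of $(P\cup Q)-S$, rerouting the bridge around any single troublesome vertex; this assembles the desired over-long chain. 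The main obstacle, and the reason the conjecture is still open in general, is exactly this counting step: when $t$ is close to $k-1$ the decomposition has many segments whose lengths can be arranged adversarially, and the two linear orders of $S$ can be made to interact so as to defeat the naive averaging, which only ever yields length $\ge\ell$ with equality everywhere; extracting a \emph{strict} gain appears to require either an LP-duality/weighting sharpening of the averaging or a genuinely new structural input. This is consistent with the landscape of known cases: either $k$ is small (few segments, finite case analysis) or one is in a dense regime such as $k\ge\frac{n-2}{3}$ or $k\ge\frac{n+2}{5}$, where $n$ being small relative to $k$ forces every segment to be short.
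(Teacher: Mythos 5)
The statement you are addressing is Conjecture~\ref{conj} itself, which is \emph{open}; the paper does not prove it in general, but only establishes the cases $k\le 5$ (Theorems~\ref{thm:k=4} and~\ref{thm:k5}) and $n\le 5k-2$ (Corollary~\ref{cor:main}, via Theorem~\ref{thm:main}). Your text is therefore not, and cannot be judged as, a proof of the stated result, and indeed you concede this yourself: the entire argument hinges on the ``counting step forcing a profitable exchange,'' which you describe only as what ``one wants to conclude'' and then acknowledge fails because the naive averaging over splices at vertices of $S$ ``only ever yields length $\ge\ell$ with equality everywhere.'' That is precisely the gap: an exchange of segments of $P$ and $Q$ through $S$ never produces a path longer than $\ell$ on its own (the two longest paths already witness equality), so some additional structural input is needed to extract a strict gain, and none is supplied. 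The other load-bearing step is also too weak as stated: the observation that $G-S$ is connected (or that there is ``one unit of spare connectivity'') gives you a single bridging path between components of $(P\cup Q)-S$, but a single bridge between two arbitrary pieces does not assemble an over-long chain --- one must control \emph{which} pieces the bridge joins and how its endpoints sit relative to the cut vertices of $S$, which is exactly where the real work lies.

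For comparison, the paper's partial results get their strict gain from two quite different mechanisms. For Theorem~\ref{thm:main}, Lemma~\ref{lem:submain} applies the Fan Lemma to an edge $qq'$ of $Q$ outside $P$ to obtain $k-1$ internally disjoint paths from each of $q,q'$ to $P$, and converts these into the quantitative lower bound $\ell\ge 4k-3-|N_G(\{q,q'\})\cap V(P)|$, then bounds $|N_G(\{q_0,q_i\})\cap V(P)|$ by $|V(P)\cap V(Q)|$ via an explicit injection (after normalizing the ends of $Q$ with Lemma~\ref{claim-structure}); this trades length against intersection rather than trying to beat $\ell$ by splicing alone. For Theorem~\ref{thm:k5}, the paper builds the auxiliary graph $\mathcal{H}$ on the components of $P-S$ and $Q-S$, uses connectivity of $G-S$ to force edges of $\mathcal{H}$, and rules out each possible permutation $\sigma$ by a case analysis with ``replaceable pairs'' --- the inequality $\ell(P')+\ell(Q')=\ell(P)+\ell(Q)+2\ell(R)$ with $\ell(R)\ge 1$ is what yields strictness there. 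Your bridging idea is in the spirit of this second mechanism, but without the normalization of endpoints, the replaceable-pair bookkeeping, and the case analysis over the orders of $S$ on $P$ and $Q$, it does not close even the small cases, let alone the general conjecture.
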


Hippchen~\cite{hippchen2008intersections} proved the above conjecture for  $k \le 3$, and recently, Guti\'{e}rrez~\cite{gutierrez2020intersection} proved the conjecture for $k = 4$.

\begin{theorem}[\cite{hippchen2008intersections,gutierrez2020intersection}]\label{thm:k=4}
For $k \le 4$, Conjecture~\ref{conj} is true.
\end{theorem}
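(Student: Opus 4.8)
The plan is to establish, for each $t\in\{0,1,2,3\}$ separately, the statement $C(t)$: \emph{if $G$ is $(t+1)$-connected, then no two longest paths of $G$ share exactly $t$ vertices.} These four statements imply Theorem~\ref{thm:k=4}, since if $G$ is $k$-connected with $k\le 4$ and $P,Q$ are longest paths sharing $s$ vertices with $s\le k-1$, then $G$ is $(s+1)$-connected, contradicting $C(s)$, so $s\ge k$. Here $C(0)$ is just the classical fact that two longest paths in a connected graph meet. To prove $C(t)$ for $t\ge 1$, suppose for contradiction that $G$ is $(t+1)$-connected and that $P,Q$ are longest paths, say of length $\ell$, with $S:=V(P)\cap V(Q)$ and $|S|=t$. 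Since $|S|=t<t+1\le\kappa(G)$, the graph $G-S$ is connected; also $V(P)\neq V(Q)$, since otherwise $|S|=\ell+1>t$ because a $(t+1)$-connected graph has minimum degree at least $t+1$ and hence a path on more than $t+1$ vertices. So $A:=V(P)\setminus S$ and $B:=V(Q)\setminus S$ are nonempty and disjoint, and the goal is to build a path longer than $\ell$.

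Two tools drive the construction. First, a \emph{splicing inequality}: if $x,y$ are distinct vertices of $P$ and $R$ is a path from $x$ to $y$ with $V(R)\cap V(P)=\{x,y\}$, then replacing the subpath $xPy$ of $P$ by $R$ again yields a path, so $\mathrm{len}(R)\le\mathrm{len}(xPy)$; applied with $y$ an endpoint of $P$, or with $R$ leaving $P$ at a brand-new vertex, this bounds every detour off $P$ by a tail of $P$, and likewise for $Q$. Second, the connectedness of $G-S$ gives a shortest path $R$ in $G-S$ from $A$ to $B$; being shortest, $R$ meets $V(P)\cup V(Q)$ only in its endpoints $a\in A$, $b\in B$, and $\mathrm{len}(R)\ge 1$. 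The generic move is then: cut $P$ at $a$ and $Q$ at $b$, keep a well-chosen tail of $P$ past $a$ and of $Q$ past $b$, chosen on opposite sides of $S$ so that their vertex sets are disjoint, and join them through $R$; the splicing inequality makes the tails long and the extra edges of $R$ push the total above $\ell$. When a vertex of $S$ is an endpoint of $P$ (or $Q$), one can sometimes instead splice $P$ and $Q$ at that vertex directly, though for $t\ge 2$ one must check that the resulting walk does not revisit another vertex of $S$.

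The real work is the case analysis ensuring suitable tails always exist. One stratifies by: how many vertices of $S$ are endpoints of $P$, and of $Q$; the relative order in which the $t$ vertices of $S$ occur along $P$ and along $Q$ (these need not agree); and which of the at most $t+1$ segments of $P$, and of $Q$, contain $a$ and $b$. In each configuration the splicing inequality pins down the lengths of several segments, and then a short accounting of $\mathrm{len}(R)\ge 1$ against the lengths of the one or two segments discarded in the reroute produces a path longer than $\ell$; it is often convenient to first replace $P$ by an equally long path that is better aligned with $Q$ (swapping a segment of $P$ between two vertices of $S$ for a corresponding segment of $Q$).

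I expect the $t=3$ case to be the main obstacle. With up to four segments on each of $P$ and $Q$ and no a priori relation between the two orders of $S$, there are many configurations, and in several no single detour $R$ obviously beats $\ell$, so one must argue more globally --- using $2$- or $3$-connectivity of $G$, or of $G-s$ for a vertex $s\in S$, to reroute inside $P\cup Q$, or choosing the pair $(P,Q)$ extremally (say, to maximize the total length of the $P$- and $Q$-segments meeting a fixed vertex of $S$) to eliminate the stubborn configurations. Throughout, the delicate point is certifying that each walk built is genuinely a path, with no repeated vertex, while still strictly longer than $\ell$ --- which is presumably also why the argument stalls at $t=3$ and why going beyond $k=4$ needed the new ideas of this paper.
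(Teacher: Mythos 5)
There is a genuine gap: what you have written is a strategy, not a proof. Your reduction to the statements $C(t)$ for $t\le 3$ is logically fine, and the two tools you name (the splicing inequality for detours off a longest path, and a shortest $(A,B)$-path in $G-S$ meeting $V(P)\cup V(Q)$ only in its ends) are indeed the standard starting points. But the entire substance of the theorem lies in the case analysis you defer: you describe a ``generic move'' of joining two tails through $R$, acknowledge that it does not work in all configurations, and then say one must ``argue more globally'' or choose $(P,Q)$ extremally, without exhibiting the argument. In particular $C(3)$, i.e.\ the $k=4$ case, is exactly the content of Guti\'errez's paper, and $C(1),C(2),C(3)$ for $k\le 3$ are Hippchen's thesis; neither is a short computation. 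Note also that this paper does not reprove the statement at all --- Theorem~\ref{thm:k=4} is quoted from \cite{hippchen2008intersections,gutierrez2020intersection} --- and the paper's own Section~\ref{sec:k=5:by Boram}, which handles the very next case $|S|=4$, should give you a sense of scale: even with $|S|=4$ already guaranteed, one needs the normalization of Lemma~\ref{claim-structure}, a classification into seven permutations $\sigma$, the notion of replaceable pairs, and an auxiliary connectivity argument in the graph $\mathcal{H}$ to rule out every configuration. Your sketch provides no comparable mechanism for resolving the configurations where the orders of $S$ along $P$ and $Q$ disagree, which is precisely where the ``generic move'' fails (the tails you want to keep need not be vertex-disjoint, and the discarded segments can be long).

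A further caution: any argument of this shape must somewhere use the hypothesis $t\le k-1$ in an essential, quantitative way, since in the extremal examples (e.g.\ the complete bipartite graph with parts of sizes $k$ and $2k+2$) two longest paths share exactly $k$ vertices and every attempted reroute fails. Your splicing inequality and the existence of $R$ hold there too, so they alone can never produce the contradiction; the missing counting (how many attachment points of detours are forced into $S$, as in Claim~\ref{claim2} of this paper, or its analogues in Hippchen's and Guti\'errez's proofs) is the part your proposal does not supply.
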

Our first result verifies Hippchen's Conjecture for the next open case, which was a conjecture in~\cite{gutierrez2020intersection}.

\begin{theorem}\label{thm:k5}
For $k=5$, Conjecture~\ref{conj} is true. 
\end{theorem}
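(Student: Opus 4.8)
The plan is to argue by contradiction, using the known case $k=4$ as a starting point. Suppose $G$ is $5$-connected and $P,Q$ are longest paths with $|V(P)\cap V(Q)|\le 4$. Since every $5$-connected graph is $4$-connected, Theorem~\ref{thm:k=4} gives $|V(P)\cap V(Q)|\ge 4$, so in fact $|V(P)\cap V(Q)|=4$; write $S=V(P)\cap V(Q)=\{a,b,c,d\}$. Also, $5$-connectivity forces $\delta(G)\ge 5$, hence every longest path has at least six vertices, so $V(P)\setminus S$ and $V(Q)\setminus S$ are both nonempty. The three sets $V(P)\setminus S$, $V(Q)\setminus S$ and $V(G)\setminus(V(P)\cup V(Q))$ are pairwise disjoint, and $G-S$ is connected; therefore there is a path $R$ in $G-S$ with one end in $V(P)\setminus S$, the other end in $V(Q)\setminus S$, and no internal vertex in $V(P)\cup V(Q)$ (take $R$ shortest). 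We will splice $R$ together with pieces of $P$ and $Q$ to form a path longer than $P$, a contradiction; along the way we may also use P\'osa-type rotations at the ends of $P$ and of $Q$, which are available because $P,Q$ are longest and hence $N(s_P)\cup N(e_P)\subseteq V(P)$ and $N(s_Q)\cup N(e_Q)\subseteq V(Q)$.

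To manage the splicing, record the combinatorial layout: the vertices of $S$ cut $P$ into segments $P_0,P_1,P_2,P_3,P_4$ ($P_0$ being the part before the first vertex of $S$ on $P$, and $P_0$ or $P_4$ trivial exactly when the corresponding end of $P$ lies in $S$), and likewise $Q$ into $Q_0,\dots,Q_4$; the interiors of the $P_i$'s are disjoint from the interiors of the $Q_j$'s, and $\sum_i|P_i|=|P|=|Q|=\sum_j|Q_j|$. Viewing the $P_i$'s and $Q_j$'s as weighted edges of an auxiliary multigraph on $S\cup\{s_P,e_P,s_Q,e_Q\}$, with the two segments containing the ends of $R$ each subdivided at that end and joined by a weighted edge for $R$, any vertex-simple path in this multigraph corresponds to a genuine path of $G$; since $P$ is longest, every such path that avoids $R$ has weight at most $|P|$, which pins down useful inequalities among the segment lengths. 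The proof then divides into cases according to (i) how many vertices of $S$ are ends of $P$ and of $Q$ (equivalently, which segments are trivial), and (ii) the order in which $P$ and $Q$ visit $a,b,c,d$, i.e.\ the permutation relating the two orderings; reversing $P$, reversing $Q$, and interchanging $P$ with $Q$ reduce this to a manageable list. In each configuration one writes down one or two splices through $R$ (occasionally preceded by a rotation), estimates each splice's length as $|P|$ minus the omitted segments plus $|R|$ plus the reused segments, and derives a contradiction, often from two incompatible inequalities on where the ends of $R$ sit inside their segments---exactly the mechanism of the $k\le 2$ case.

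The main difficulty is the interleaving analysis in (ii): with four common vertices, several patterns of how the $P$-segments and $Q$-segments alternate must be checked, and in the \emph{locked} patterns no single reroute through $R$ is visibly longer than $P$. For these one either combines $R$ with a rotation at a free end of $P$ or $Q$, or argues that the pattern forces some segment to be short and reroutes around it; one may also need a second connecting path, obtained by deleting only three of the four vertices of $S$ (so that $G$ minus those three vertices is $2$-connected and supplies two disjoint connectors) when one vertex of $S$ plays a distinguished role. The endpoint-incidence cases of (i) interact with this, since a vertex of $S$ that is an end of $P$ or $Q$ both kills a segment and removes a free end otherwise usable for a rotation, so these cases are handled together. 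Finally, one must rule out the degenerate possibilities that an end of $R$ coincides with an attachment vertex of its segment or that $R$ returns to the same segment it left, which would spoil the simplicity of the spliced walk; choosing $R$ shortest handles these. Exhibiting in every case a path longer than $P$ completes the contradiction and establishes Conjecture~\ref{conj} for $k=5$.
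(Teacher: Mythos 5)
Your setup is sound and matches the paper's opening moves: $|S|=4$ by the $k\le 4$ case, the segments of $P$ and $Q$ cut by $S$, the interleaving permutation reduced by reversals and by swapping $P$ with $Q$, the connectedness of $G-S$ giving a connector $R$ with no internal vertex in $V(P)\cup V(Q)$, and rotations at ends of longest paths (the paper uses these rotations precisely to assume from the start that no end of $P$ or $Q$ lies in $S$, which removes your case (i) entirely). The gap is exactly where you label configurations ``locked'' and then wave at remedies. A single connector $R$, even combined with a rotation, simply does not produce a longer path in general: for instance, with $\sigma$ the identity, a connector from an internal vertex of $P_{(v_1,v_2)}$ to an internal vertex of $Q_{(v_2,v_3)}$ admits no splice whose two resulting paths have total length exceeding $\ell(P)+\ell(Q)$; such adjacencies are genuinely compatible with $P$ and $Q$ both being longest, and the paper's Claim~\ref{claim:XY} only forbids connectors between two end-segments or between two segments meeting at a common cut vertex $v_i$. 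Your two fallback ideas are unsubstantiated: nothing in the hypotheses forces any segment to be short, and two disjoint connectors obtained from $2$-connectedness of $G$ minus three vertices of $S$ would still attach to segments for which no length-increasing splice is evident (and their internal disjointness from $V(P)\cup V(Q)$ and from each other in the needed form is not established).

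What actually closes these cases in the paper is a global argument, not a local splice: one builds the auxiliary graph $\mathcal{H}$ on \emph{all} components of $P-S$ and $Q-S$, notes it is connected because $G-S$ is, proves forbidden-adjacency statements (Claims~\ref{claim:XY} and~\ref{claim:p0}), including a ``replaceable pair'' surgery that lets a two-edge path of $\mathcal{H}$ through a swappable segment be converted into a single usable connector, and then argues permutation by permutation: for $\sigma_3,\sigma_4,\sigma_5,\sigma_7$ the allowed neighborhoods of $P_0$ collapse to nothing, for $\sigma_6$ one gets $N_{\mathcal H}(P_0)=\{P_1\}$ and $N_{\mathcal H}(P_1)=\{P_0\}$, contradicting connectedness of $\mathcal{H}$, and for $\sigma_1,\sigma_2$ one takes a shortest $\mathcal{H}$-path from $\{P_0,Q_0\}$ to $\{P_1,Q_1\}$, shows its internal vertices lie in $\mathcal{P}$, and splices the resulting $(P_0,P_1)$-connector through the rerouted longest path $P^*=P_{[p_0,v_1]}+Q_{[v_1,v_4]}+P_{[v_4,p_1]}$ to force a path longer than a longest path. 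None of this machinery (or any working substitute for it) appears in your plan, so the decisive combinatorial content of the $k=5$ case is missing; as written, the proposal is a correct framework with the hard cases unresolved.
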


Guti\'errez also proved that the same conclusion holds whenever the connectivity is linear in terms of the number of vertices. 
Namely, for a $k$-connected graph $G$ on at most $3k+2$ vertices, two longest paths in $G$ share at least $k$ vertices.
He actually proved the following stronger statement: If $G$ is a $k$-connected graph on $n$ vertices, then two longest paths in $G$ share at least $\left\lceil\frac{8k-n+2}{5}\right\rceil$ vertices. 
We improve upon the aforementioned statement in the following form: 

\begin{theorem}\label{thm:main}
If $G$ is a $k$-connected graph on $n$ vertices, then two longest paths in $G$ share at least $\min\left\{ k, \left\lceil\frac{8k-n-4}{3}\right\rceil\right\}$ vertices. 
\end{theorem}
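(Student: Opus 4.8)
The plan is to argue by contradiction, reducing the theorem to a lower bound on the length of a longest path. Suppose $G$ is $k$-connected on $n$ vertices but admits two longest paths $P$ and $Q$, of common length $\ell:=|E(P)|=|E(Q)|$, with $t:=|V(P)\cap V(Q)|<\min\{k,\lceil\frac{8k-n-4}{3}\rceil\}$; among all such pairs I would fix $P$ and $Q$ subject to a suitable extremal condition, say with $|E(P)\cap E(Q)|$ as large as possible. Since $t$ is an integer and $t<\lceil\frac{8k-n-4}{3}\rceil$ we get $3t<8k-n-4$, hence $n\le 8k-3t-5$; we also have $t<k$. As $|V(P)\cup V(Q)|=2\ell+2-t$, it therefore suffices to prove the inequality
\[
 \ell \;\ge\; 4k-t-3,
\]
because this forces $n\ge|V(P)\cup V(Q)|=2\ell+2-t\ge 8k-3t-4>8k-3t-5\ge n$, a contradiction. (In addition one may assume $k\ge 6$: for $k\le 5$ the conclusion ``$\ge k$'', hence ``$\ge\min\{k,\cdot\}$'', already follows from Theorems~\ref{thm:k=4} and~\ref{thm:k5}. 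The range $8k-n-4\le 0$ needs no argument, since two longest paths in a connected graph always meet.)

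So the heart of the proof is the claim: \emph{in a $k$-connected graph, any two longest paths $P,Q$ with $|V(P)\cap V(Q)|=t<k$ satisfy $|E(P)|\ge 4k-t-3$.} Let $I:=V(P)\cap V(Q)$. The vertices of $I$, in the order they appear along $P$, split $P$ into at most $t+1$ maximal subpaths internally disjoint from $I$; call these the $P$-segments, and define the $Q$-segments of $Q$ analogously. The $P$-segments and $Q$-segments are pairwise vertex-disjoint and all lie in $G-I$, which is $(k-t)$-connected since $t<k$. The idea is to show that if $\ell$ were smaller than $4k-t-3$, then $P$ and $Q$ together would be ``too short'' to be longest: using that each endpoint of a longest path has all of its $\ge k$ neighbours on that path, and using Menger's theorem and the fan lemma to route through $G-I$, one grafts pieces of $Q$ (and, if present, vertices of $V(G)\setminus(V(P)\cup V(Q))$) onto $P$ to obtain a path strictly longer than $P$. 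The bound $4k-t-3$ should come out of balancing the contributions of the at most $t+1$ segments of each of $P$ and $Q$ against the $\ge k$ independent neighbours available at each of the (at most four, and not necessarily distinct) endpoints of $P$ and $Q$.

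I expect the main obstacle to be exactly this grafting argument and its bookkeeping. One must keep careful track of how many $P$-segments and $Q$-segments there actually are and how long they are; which vertex of $I$ is the $P$-neighbour (resp.\ $Q$-neighbour) of each segment endpoint; the degenerate cases where an endpoint of $P$ or $Q$ lies in $I$, where $P$ and $Q$ share an endpoint, or where a segment is a single vertex; and how the Menger/fan paths interact with the segments already used. This is also precisely where the improvement over Guti\'errez's bound $\lceil\frac{8k-n+2}{5}\rceil$ must be found: his analysis gives only roughly $|E(P)|\ge 4k-2t$, and upgrading this to $|E(P)|\ge 4k-t-3$ requires a more economical rerouting — morally, reusing each shared ``cut vertex'' of $I$ more efficiently, so that the number of segments, rather than twice that number, governs the loss. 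Once the claim is proved, the vertex count of the first paragraph closes the argument.
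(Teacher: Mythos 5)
Your reduction is fine and is essentially the paper's own closing step in contrapositive form: the claim you isolate, namely that two longest paths with $t=|V(P)\cap V(Q)|<k$ in a $k$-connected graph satisfy $\ell\ge 4k-t-3$, is exactly Lemma~\ref{lem:submain}, and combining it with $n\ge |V(P)\cup V(Q)|=2\ell+2-t$ does yield the stated bound. But that claim is precisely the heart of the theorem, and you do not prove it -- you only describe a plan (``graft pieces of $Q$ onto $P$ via Menger/fan routing through $G-I$'') and yourself note that the bookkeeping, and the entire improvement over Guti\'errez's $4k-2t$-type estimate, lives there. As it stands this is a genuine gap, not a proof.

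Concretely, the missing content is the following three-step argument from the paper. First (Lemma~\ref{claim-structure}), one reroutes $Q$ within its own vertex set to produce a longest path $Q':q'_0q'_1\cdots q'_\ell$ having an \emph{edge} $q'_0q'_i$ with both endpoints outside $V(P)$ and with $i=1$ or $q'_0q'_{i+1}\in E(G)$; this uses the $\ge k$ neighbours of an endpoint together with a pigeonhole argument over pairs of consecutive vertices of $Q'$, and is what later lets one keep $N_G(q'_i)\subseteq V(Q)$ or control it by an injection. Second (Claim~\ref{claim2}), one applies the Fan Lemma at \emph{both} ends $q,q'$ of such an edge, obtaining two fans of $k-1$ internally disjoint paths to $V(P)$; summing the lengths of the $P$-segments between consecutive fan feet, with an extra $+1$ for each foot that is a distance-one foot of both fans, and averaging the two resulting inequalities gives $\ell\ge 4k-3-|N_G(\{q,q'\})\cap V(P)|$. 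Third, one shows $|N_G(\{q_0,q_i\})\cap V(P)|\le |V(P)\cap V(Q)|$ by exhibiting an injection from these neighbours into $V(P)\cap V(Q)$ (a neighbour $p\notin V(Q)$ is mapped to its successor $p^+$ on $P$, which must lie on $Q$ by maximality of $Q$, and injectivity again uses maximality of $P$). Your sketch contains none of these devices -- in particular, nothing plays the role of the edge $qq'$ off $P$ whose two fans are combined, nor of the injection replacing $|N_G(\{q,q'\})\cap V(P)|$ by $t$ -- and a single-fan argument of the kind you describe is exactly what caps out at the weaker Guti\'errez-type bound. Also, your extremal choice (maximizing $|E(P)\cap E(Q)|$) and the $(k-t)$-connectivity of $G-I$ are never used in the actual proof and do not obviously help; the paper instead works inside $G$ with the full $k$-connectivity via the Fan Lemma.
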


Since $k\ge \frac{n+2}{5}$ implies $\min\left\{ k, \left\lceil\frac{8k-n-4}{3}\right\rceil\right\} =k$, we obtain the following corollary as a direct consequence of Theorem~\ref{thm:main}, thereby extending the class of graphs for which Hippchen's Conjecture is true.
This resolves a proposed conjecture in~\cite{gutierrez2020intersection} in a stronger form. 

\begin{corollary}\label{cor:main}
For a $k$-connected graph on at most $5k-2$ vertices, Conjecture~\ref{conj} is true.
\end{corollary}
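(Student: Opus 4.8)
Corollary~\ref{cor:main} follows at once from Theorem~\ref{thm:main}: when $n\le 5k-2$ we have $8k-n-4\ge 3k-2$, hence $\left\lceil\frac{8k-n-4}{3}\right\rceil\ge k$, so the minimum in Theorem~\ref{thm:main} is exactly $k$. The real content is Theorem~\ref{thm:main}, and the plan is to prove it by a contradiction built from repeated path-rerouting against the maximality of two longest paths, together with repeated use of $k$-connectivity, organised so that each use of connectivity is ``charged in full''. We begin with reductions: Theorem~\ref{thm:k=4} disposes of $k\le 4$, and the claimed bound is trivially true once $8k-n-4\le 0$, so assume $k\ge 5$ and $n\le 8k-5$. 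If $n\le 2k$, then $\alpha(G)\le n-\kappa(G)\le k\le\kappa(G)$, so $G$ is Hamiltonian by the Chv\'atal--Erd\H{o}s theorem; then every longest path is a Hamilton path, any two of them share all $n\ge k+1$ vertices, and we are done. So assume also $n\ge 2k+1$; by a classical theorem of Dirac, $G$ then has a path on at least $2k$ vertices, so every longest path has at least $2k$ vertices. Now fix two longest paths $P$ and $Q$ with $t:=|V(P)\cap V(Q)|$ as small as possible, and write $I=V(P)\cap V(Q)$. If $t\ge k$ we are done, so assume $t\le k-1$, and suppose toward a contradiction that $n\le 8k-3t-5$. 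Writing $\ell+1=|V(P)|=|V(Q)|$, the bound $2(\ell+1)-t=|V(P)\cup V(Q)|\le n$ forces $|V(P)|\le 4k-t-3$; thus $P$ (and $Q$) is short, and the goal is to convert this shortness, via $k$-connectivity, into a path longer than $P$.

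For the structure, let $H=P\cup Q$ and delete $I$; the components of $H-I$ are exactly the maximal subpaths of $P$, and of $Q$, that avoid $I$ --- call these the \emph{$P$-blocks} and \emph{$Q$-blocks}. There are at most $t+1$ of each, and at most four of them are \emph{terminal}, meaning they contain an endpoint of $P$ or of $Q$. Two rerouting principles constrain everything. First, a non-terminal $P$-block $B$ is flanked along $P$ by two vertices $x,y\in I$; since $B$ is disjoint from $V(Q)$, substituting the path $x\,B\,y$ for the $x$--$y$ subpath of $Q$ yields another path of $G$, so by maximality of $Q$ its length does not increase --- this both makes the block lengths along $P$ and along $Q$ tightly balanced and forbids any edge of $G$ between the interiors of two blocks that would splice them into a path longer than $\ell$. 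Second, the four endpoints of $P$ and $Q$ have all their neighbours inside $V(H)$, and in fact the terminal blocks are rigid: an edge from a terminal block to a vertex outside $V(H)$, or to a location allowing that block to be appended to the other path, would lengthen $P$ or $Q$. Since these are essentially the only ways to build a longer path, every configuration we fail to rule out must respect all of them.

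Finally, the connectivity count. For a block $B$, or more generally for a union $W$ of consecutive blocks together with the $I$-vertices strictly between them, either $W\cup N_G(W)=V(G)$ --- which already makes $n$ large --- or $N_G(W)$ is a separator of $G$, so $|N_G(W)|\ge k$. The rerouting principles confine $N_G(W)\cap V(H)$ to $I$ together with a bounded list of other blocks, so the excess $|N_G(W)|-|N_G(W)\cap I|\ge k-t$ must be absorbed either by vertices of $G$ outside $V(H)$ or by interior vertices of those few blocks. Running this over a family of blocks $W$ chosen so that their absorbing regions overlap in only boundedly many vertices, and adding the resulting inequalities, produces a lower bound of the form $|V(P)\setminus I|+|V(Q)\setminus I|+|V(G)\setminus V(H)|\ge 4k-c_1-c_2 t$ for small constants $c_1,c_2$; combining this with $|V(P)|\ge 2k$ and the balancedness of the blocks then pushes the total up to $n\ge 8k-3t-4$, contradicting $n\le 8k-3t-5$.

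The hard part --- where essentially all of the work lies, as well as the improvement over the factor-$5$ bound of~\cite{gutierrez2020intersection} --- is twofold. First, one must run a genuine case analysis on the cyclic pattern in which $P$-blocks, $Q$-blocks, and $I$-vertices interleave along $P$ and along $Q$; the delicate cases are small $t$ and configurations with very few blocks, where averaging over blocks gives nothing and one must instead exhibit an explicit reroute of $P$ through (part of) $Q$ that is longer than $P$. Second, and quantitatively, one must choose the sets $W$ so that the separators $N_G(W)$ barely overlap, so that each application of $k$-connectivity contributes almost a full $k-t$ new vertices rather than sharing them with another application; it is precisely this sharpening of the overlap bookkeeping that replaces the denominator $5$ of~\cite{gutierrez2020intersection} by $3$.
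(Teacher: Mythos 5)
Your first sentence is the whole proof, and it is exactly the paper's: from $n\le 5k-2$ one gets $8k-n-4\ge 3k-2$, hence $\left\lceil\frac{8k-n-4}{3}\right\rceil\ge k$, so the minimum in Theorem~\ref{thm:main} equals $k$ and the corollary follows. Since Theorem~\ref{thm:main} is proved in the paper, nothing more is needed, and on the corollary itself your argument is correct and identical to the paper's.

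The remainder of your proposal, however, should not be read as a proof of Theorem~\ref{thm:main}. Taken on its own terms it has a genuine gap: the decisive quantitative step --- choosing sets $W$ whose separators ``barely overlap'' and summing the inequalities $|N_G(W)|\ge k$ to reach $n\ge 8k-3t-4$ --- is asserted with unspecified constants $c_1,c_2$, and you yourself defer the case analysis on block patterns to ``the hard part,'' which is never carried out; as written this is a plan, not an argument. It is also not the route the paper takes. The paper proves Lemma~\ref{lem:submain}: after normalizing $Q$ via Lemma~\ref{claim-structure} to obtain an edge $qq'$ of $Q$ with both ends off $P$, the Fan Lemma gives $k-1$ internally disjoint fans from each of $q$ and $q'$ to $P$, and a length count along $P$ yields $\ell\ge 4k-3-|N_G(\{q,q'\})\cap V(P)|$; an injection into $V(P)\cap V(Q)$ (using the maximality of $P$ and $Q$) then gives $|N_G(\{q,q'\})\cap V(P)|\le|V(P)\cap V(Q)|$, so $|V(P)\cap V(Q)|\ge\min\{4k-\ell-3,k\}$. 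Theorem~\ref{thm:main} follows by combining this with the trivial bound $|V(P)\cap V(Q)|\ge 2\ell+2-n$ through the weighted average $\frac{2\alpha+\beta}{3}$, with no block decomposition or separator bookkeeping at all. So: the corollary is fine as a one-line consequence of the theorem; the sketched re-proof of the theorem is incomplete and should either be dropped or replaced by (a citation of) the paper's Lemma~\ref{lem:submain} argument.
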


In Section~\ref{sec:pre}, we give notation and definitions used throughout the paper, and lemmas that are useful in our proof. 
In Section~\ref{sec:general}, we prove Theorem~\ref{thm:main}, and 
the proof of Theorem~\ref{thm:k5} is presented in Section~\ref{sec:k=5:by Boram}.
We conclude the paper with some remarks in Section~\ref{sec:remarks}.

\section{Preliminaries}\label{sec:pre}

We consider only finite simple graphs.
For a graph $G$, let $V(G)$ and $E(G)$ denote its vertex set and edge set, respectively.
Given $S \subseteq V(G)$, we use $G-S$ to denote the subgraph of $G$ induced by $V(G)\setminus S$.
Given a path $P$, the {\it length} of $P$, denoted $\ell(P)$, is the number of edges of $P$. 
For two vertices $x$ and $y$ on a path $P$, let $P_{[x,y]}$ denote the subpath of $P$ from $x$ to $y$. 
We use $P_{(x,y)}$ to denote $P_{[x,y]}-\{x,y\}$.
Given $S, T \subseteq V(G)$, an {\it $(S,T)$-path} is a path from a vertex in $S$ to a vertex in $T$ where none of its internal vertices are in $S\cup T$.
For brevity, if $S =\{x\}$ and $T = \{y\}$, then we say a {\it $(x,y)$-path}.
If $P$ and $Q$ are two paths starting from $x$ such that the only common vertex is $x$, then let $P+Q$ denote the path whose vertex set is $V(P) \cup V(Q)$ and edge set is $E(P) \cup E(Q)$.
Also, for a path $P$ and its subpath $R$, we use $P-R$ to denote the subgraph of $P$ induced by $V(P) \setminus V(R)$.

We end this section with a useful lemma that will be used in the forthcoming proofs.

\begin{lemma}\label{claim-structure}
Let $G$ be a $k$-connected graph, where $P$ and $Q$ are longest paths in $G$. 
If $|V(P)\cap V(Q)|\leq k-1$, then there exists a longest path $Q':q'_0q'_1\cdots q'_\ell$ in $G$ such that $V(Q)=V(Q')$,
$q'_0,q'_i\not\in V(P)$, $q'_i$ is a neighbor of $q'_0$, and either $i=1$ or $q'_0q'_{i+1}\in E(G)$ for some $i\in\{2,\ldots,\ell-1\}$. 
\end{lemma}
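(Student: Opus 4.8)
The plan is to repeatedly \emph{re-route} $Q$ on its own vertex set --- principally by rotations at endpoints --- and to play this flexibility off against the hypothesis $|S|\le k-1$, where $S:=V(P)\cap V(Q)$. Two standard facts drive everything. Since $G$ is $k$-connected, $|V(Q)|\ge k+1$, so at least two vertices of $Q$ lie outside $V(P)$; and since $P$ and $Q$ are longest, every neighbor of an endpoint of $Q$ already lies on $Q$, so each endpoint of $Q$ (and of any longest path on $V(Q)$) has at least $k$ neighbors among the vertices of $Q$. I will call a vertex of $Q$ \emph{clean} if it is not in $V(P)$ and \emph{dirty} otherwise; there are at most $k-1$ dirty vertices. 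The goal is to exhibit a longest path $Q'$ on $V(Q)$ that either starts with a clean edge (the case $i=1$) or whose endpoint has two consecutive neighbors $q'_i,q'_{i+1}$ on $Q'$ with $q'_i$ clean.

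First I would arrange that some endpoint of $Q$ is clean. If both endpoints of $Q$ are dirty, write $Q:q_0\cdots q_\ell$ and let $q_{j_1},\dots,q_{j_m}$ (with $m\ge k$ and $j_1=1$) be the neighbors of $q_0$, all lying on $Q$. The $m\ge k$ predecessors $q_{j_1-1},\dots,q_{j_m-1}$ are distinct vertices of $Q$, so one of them, say $q_{j_t-1}$, is clean; since $q_0=q_{j_1-1}$ is dirty we have $t\ge 2$, and rotating $Q$ at $q_{j_t}$ produces a longest path on $V(Q)$ with the clean endpoint $q_{j_t-1}$. So from now on I may assume $Q:q_0\cdots q_\ell$ has $q_0\notin V(P)$, and in fact I would fix at the outset an \emph{extremal} such longest path on $V(Q)$, say one maximizing the number of clean vertices in its maximal clean initial segment. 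If that segment has length at least $1$, i.e.\ $q_1\notin V(P)$, then $Q'=Q$ with $i=1$ is the desired path; so assume $q_1\in V(P)$.

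Next I would analyze the neighbors $q_{j_1},\dots,q_{j_m}$ ($m\ge k$, $j_1=1$) of $q_0$. If two of them are consecutive on $Q$, say $q_0q_p,q_0q_{p+1}\in E(G)$ with $q_p\notin V(P)$, then $Q'=Q$ with $i=p$ works --- here $p\ne 1$ because $q_1$ is dirty, and $p\le\ell-1$ because $q_{p+1}$ exists. Otherwise, for each $t\ge 2$, rotating $Q$ at $q_{j_t}$ yields a longest path with clean-or-dirty endpoint $q_{j_t-1}$ and second vertex $q_{j_t-2}$; if its clean initial segment is not to exceed that of $Q$, then $\{q_{j_t-2},q_{j_t-1}\}$ must contain a dirty vertex. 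When no two neighbors of $q_0$ are consecutive, the pairs $\{q_{j_t-2},q_{j_t-1}\}$ for $t=2,\dots,m$ occupy pairwise disjoint position pairs, so they witness $|S|\ge m-1\ge k-1$; combined with $|S|\le k-1$ this forces $m=k$, $|S|=k-1$, each such pair meeting $S$ in exactly one vertex, and $q_1$ lying in one of these pairs. In this rigid configuration (and, analogously, in the case where $q_0$ does have consecutive neighbors but the ``left'' vertex of every such pair is dirty), a final short case analysis --- on the location of $q_1$ among the pairs, together with one further rotation --- produces either a clean pair of consecutive neighbors of some endpoint or a longest path whose endpoint and second vertex are both clean.

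The main obstacle is bookkeeping the rotations so that the argument terminates rather than loops: a naive rotation at a clean neighbor of $q_0$ merely yields another clean‑endpoint longest path to which the very same dilemma applies. The reason for committing to the extremal longest path above is precisely to close this loop --- no rotation is allowed to improve the chosen invariant --- and the delicate point will be checking that, once the counting has pinned the configuration down to the tight case $m=k$, $|S|=k-1$, there genuinely is no room left for the ``bad'' configuration, so one of the two finishing cases ($i=1$, or two consecutive neighbors with clean left vertex) must occur.
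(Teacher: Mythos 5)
Your opening moves match the paper's: rotate to obtain a clean endpoint, and note (your extremality device plays this role) that for every neighbour $q_j$ of $q_0$ with $j\ge 2$, the set $\{q_{j-2},q_{j-1}\}$ must meet $V(P)$, since otherwise a rotation produces a longest path whose first two vertices avoid $P$ and you are done. But the argument as written is not complete, and the problem is not only the deferred ``final short case analysis''. Your counting relies on the pairs $\{q_{j_t-2},q_{j_t-1}\}$ being pairwise disjoint, which you have only when no two neighbours of $q_0$ are consecutive on $Q$. The remaining case --- consecutive neighbours exist but the left vertex of every such consecutive pair lies in $V(P)$ --- is declared ``analogous'', yet there the pairs overlap (neighbours $q_j,q_{j+1}$ give pairs $\{q_{j-2},q_{j-1}\}$ and $\{q_{j-1},q_j\}$), so the bound $|S|\ge m-1$ simply does not follow, and you give no substitute count; this is precisely the configuration the lemma must defeat, so it is a genuine gap. (By contrast, your no-consecutive-neighbours tight case can indeed be closed: since all $k-1$ vertices of $S$ lie in the disjoint pairs, $q_\ell\notin V(P)$ is forced; then either $q_{\ell-1}\notin V(P)$ and reversing $Q$ finishes, or $q_{\ell-1}\in V(P)$ lies in a pair, forcing $q_\ell\in N_G(q_0)$, and the rotation $q_\ell q_0q_1\cdots q_{\ell-1}$ finishes --- but you did not carry this out either.)

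The paper sidesteps both difficulties with one preliminary step you skipped: it first rotates at \emph{both} ends so that $q'_0$ and $q'_\ell$ avoid $V(P)$. Then (unless one is immediately done) $q'_1,q'_{\ell-1}\in V(P)$ and $q'_0q'_\ell\notin E(G)$, and it covers $N_G(q'_0)\setminus\{q'_1\}$ by sets $X_{i-1}\subseteq\{q'_i,q'_{i+1}\}$ indexed by the vertices $q'_{i-1}\in V(P)\cap V(Q)$ with $i\neq\ell$; since $q'_{\ell-1}$ indexes no part, at most $k-2$ parts cover at least $k-1$ neighbours, and the pigeonhole is strict \emph{uniformly}, with no case split on how the neighbours of $q'_0$ are spaced and no tight configuration left over. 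To repair your proof, either import that ``clean both ends'' step and run such a covering/pigeonhole count, or supply a correct count in the overlapping (consecutive-neighbours, left vertex in $V(P)$) case; as submitted, that case is unproved.
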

\begin{proof} 
Suppose that $|V(P)\cap V(Q)|\le k-1$. 
 Let $Q:q_0q_1\cdots q_\ell$.
 Since $G$ is $k$-connected, $q_0$ has at least $k$ neighbors, all of which must be on $Q$ because $q_0$ is an end of $Q$, and $Q$ is a longest path.
Also, $q_0$ must have a neighbor $q_{i+1}$ ($i\ge 0$) such that $q_{i} \in V(Q)\setminus V(P)$ since $|V(P)\cap V(Q)|\le k-1$.
Now, the path $Q': Q_{[q_i,q_{0}]}+q_0q_{i+1}+Q_{[q_{i+1},q_\ell]}$ is a longest path in $G$ since $V(Q')=V(Q)$. 
Note that $q_i$, which is the first vertex of $Q'$, is not on $P$. 
Applying the same process to $q_l$, which is the other end of $Q'$,
we can find a longest path in $G$ whose vertex set is $V(Q)$ where both end vertices are not on $P$; let $Q':q'_0q'_1\cdots q'_{\ell}$ be such a path. 
That is, $Q'$ is a longest path in $G$ such that $V(Q)=V(Q')$ and $q_0',q'_\ell\not\in V(P)$.
The following holds:

\begin{enumerate}[(1)]
\item  If $q'_1\not\in V(P)$ or $q'_{\ell-1}\not\in V(P)$, then $Q'$ is the path we seek, so $q'_1,q'_{\ell-1} \in V(P)$.

\item  If $q'_0q'_\ell$ is an edge, then $q'_{\ell}q'_0q'_1\cdots q'_{\ell-1}$ is the path we seek, so $q'_{\ell}\not\in N_G(q'_0)$.

\item If $q'_0$ has a neighbor $q'_j$ $(j\ge 2)$ such that $q'_{j-1}, q'_{j-2}\in V(Q)\setminus V(P)$, then $Q'_{[q'_{j-1},q'_0]} +q'_0q'_j+Q'_{[q'_j,q'_\ell]}$ is the path we seek, so for every neighbor $q'_j$ of $q'_0$ with $j\ge 2$, 
either $q'_{j-1} \in V(P)$ or $q'_{j-2} \in V(P)$.
\end{enumerate}

In other words, since $q'_{\ell}\not\in N_G(q'_0)$, we know
$N_G(q'_0)\setminus\{q'_1\}$ is contained in $X=\{ q'_{i},q'_{i+1} \mid q'_{i-1}\in V(P)\cap V(Q), i\neq \ell \}$. 
For each $q'_{i-1}\in V(P)\cap V(Q)$ with $i\neq \ell$, 
 define\[ X_{i-1}=\begin{cases}
\{q'_i\}&\text{ if }q'_{i}\in V(P)\cap V(Q),\\
\{q'_i,q'_{i+1}\}&\text{ otherwise.}\end{cases}\]
Note that the union of the sets $X_{i-1}$'s is $X$ and the pairwise intersection of $X_{i-1}$'s is empty, so
$\{ X_{i-1}\mid q'_{i-1}\in V(P)\cap V(Q), i\neq \ell  \}$ is a partition of $X$ with at most $k-2$ parts.
Since $|N_G(q'_0)\setminus\{q'_1\}|\ge k-1$, by the Pigeonhole Principle,
there is some $q'_{i-1}\in V(P)\cap V(Q)$, with 
$i \neq \ell$ such that both vertices in $X_{i-1}=\{q'_{i},q'_{i+1}\}$ are neighbors of $q'_0$.
This also implies that  $q'_{i}\not\in V(P)$, so $Q'$ is the path we seek.
\end{proof}

The following well-known lemma for $k$-connected graphs will also be used. 

\begin{lemma} [The Fan Lemma \cite{bondy2008graph}]\label{fan}
Let $G$ be a $k$-connected graph.
If $x$ is a vertex of $G$, and $Y \subseteq V(G) \setminus \{x\}$ is a set of at least $k$ vertices of $G$, then there exists a family of $k$ internally disjoint $(x,Y)$-paths whose terminal vertices are distinct.
\end{lemma}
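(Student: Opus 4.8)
The plan is to derive the Fan Lemma from the vertex version of Menger's Theorem by the standard auxiliary-vertex construction. First I would build a graph $H$ from $G$ by adding one new vertex $z$ and joining $z$ to every vertex of $Y$ and to nothing else. Since $Y\subseteq V(G)\setminus\{x\}$, the vertices $x$ and $z$ are non-adjacent in $H$, so Menger's Theorem equates the maximum number of internally disjoint $(x,z)$-paths in $H$ (meaning paths sharing only $x$ and $z$) with the minimum order of an $(x,z)$-vertex-separator in $H$. Hence it suffices to prove that every $(x,z)$-separator $S$ of $H$ satisfies $|S|\ge k$, which then yields $k$ internally disjoint $(x,z)$-paths in $H$.

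For the separator bound, note that $z\notin S$, so $S\subseteq V(G)\setminus\{x\}$. If $Y\subseteq S$ then $|S|\ge|Y|\ge k$ and we are done; otherwise pick $y^\ast\in Y\setminus S$. Because $z$ is adjacent only to vertices of $Y$, in $H-S$ the vertex $z$ lies in the same component as $y^\ast$, so $S$ separating $x$ from $z$ forces $S$ to separate $x$ from $y^\ast$ in $G-S$ as well: any $(x,y^\ast)$-path in $G-S$ would extend along the edge $y^\ast z$ to an $(x,z)$-path in $H-S$, a contradiction. Since $x,y^\ast\in V(G)\setminus S$ lie in distinct components of $G-S$, the graph $G-S$ is disconnected, and $k$-connectivity of $G$ gives $|S|\ge k$. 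Applying Menger, $H$ contains internally disjoint $(x,z)$-paths $P_1,\dots,P_k$.

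Finally I would translate this fan back into $G$. Each $P_j$ ends $\cdots v_jz$ with $v_j\in Y$, since the only neighbors of $z$ lie in $Y$, and the $v_j$ are distinct because the $P_j$ meet only in $\{x,z\}$. Deleting $z$ from $P_j$ gives an $(x,v_j)$-path $R_j$ in $G$ whose endpoint lies in $Y$, though its interior may also meet $Y$; so I would truncate $R_j$ at its first vertex $w_j\in Y$ encountered while traversing from $x$, obtaining a genuine $(x,Y)$-path $R_j':=(R_j)_{[x,w_j]}$. These $k$ paths remain pairwise internally disjoint, and the terminal vertices $w_1,\dots,w_k$ are distinct, since a repeated value would be a vertex other than $x$ lying on two of the $R_j$'s, contradicting internal disjointness. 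This produces exactly the family sought. The argument is routine; the only step needing a little care is this last one, where the truncation must be set up so that each resulting path is an honest $(x,Y)$-path while distinctness of the terminal vertices and internal disjointness are both preserved.
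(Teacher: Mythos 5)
Your proof is correct: the auxiliary-vertex construction, the separator bound via $k$-connectivity, and the careful truncation of each path at its first vertex of $Y$ all hold up, and together they give exactly the stated family of $k$ internally disjoint $(x,Y)$-paths with distinct termini. The paper itself cites this Fan Lemma from Bondy and Murty without proof, and your Menger-based argument is essentially the standard derivation found there, so there is nothing to reconcile.
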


\section{Proof of Theorem~\ref{thm:main}}\label{sec:general}

In this section, we prove Theorem~\ref{thm:main}. The following lemma contains a key idea to prove Theorem~\ref{thm:main}.

\begin{lemma}\label{lem:submain}
Let $G$ be a $k$-connected graph with $k\ge 3$. 
If $P$ and $Q$ are two longest paths in $G$, then $P$ and $Q$ share at least $\min\{4k-\ell-3,k\}$ vertices, where $\ell=\ell(P)=\ell(Q)$.
\end{lemma}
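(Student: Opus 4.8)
\textbf{Proof proposal for Lemma~\ref{lem:submain}.}

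The plan is to argue by contradiction: assume $|V(P)\cap V(Q)|\le k-1$ and also $|V(P)\cap V(Q)|\le 4k-\ell-4$, and derive a contradiction by exhibiting a path longer than $\ell$. The starting point is Lemma~\ref{claim-structure}, which lets us replace $Q$ by a longest path $Q':q'_0q'_1\cdots q'_\ell$ with $V(Q')=V(Q)$ such that $q'_0\notin V(P)$, together with a neighbor $q'_i\notin V(P)$ of $q'_0$ with $i\ge 1$ and (when $i\ge 2$) also $q'_0q'_{i+1}\in E(G)$. The key point of this structure is that $q'_0$ together with the chord $q'_0q'_i$ (or $q'_0q'_{i+1}$) gives us a ``spare'' end: rerouting along the chord produces another longest path on the same vertex set whose endpoint $q'_{i-1}$ (or $q'_i$) lies off $P$. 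So we have two longest paths $Q_1,Q_2$ on the vertex set $V(Q)$, sharing a long common subpath, each with an endpoint outside $V(P)$; intuitively this gives us ``two chances'' to splice into $P$.

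Next I would bring in the $k$-connectivity of $G$ via the Fan Lemma (Lemma~\ref{fan}). Let $Z=V(P)\cap V(Q)$, and apply the Fan Lemma with $x=q'_0$ and $Y$ a set of $k$ vertices. Since $q'_0$ has $\ge k$ neighbors all lying on $Q$, and since $|Z|\le k-1$, many of the fan-paths (really just edges, as $q'_0$ is an endpoint of a longest path, so all its neighbors are on $Q$) land on vertices of $V(Q)\setminus V(P)$. The idea is to use such a neighbor $q'_j$ with $q'_{j-1}\notin V(P)$ (or the doubled version from Lemma~\ref{claim-structure}) to turn $Q$ into a longest path one of whose ends is a vertex $w\notin V(P)$ adjacent into the ``interior'' of $Q$; then I want to run a fan from $w$ to $V(P)$ and, using the portion of $Q$ hanging off $w$'s end, attach it to $P$ to build a strictly longer path. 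More precisely: if $w\notin V(P)$ is the end of a longest path $Q^*$ on $V(Q)$, and $P$ has a vertex $u\in Z$ with a large segment of $P$ on one side of $u$, then an $(w,P)$-path avoiding the relevant parts lets us form $P_{[\text{end},u]}$ plus the detour plus $Q^*_{[w,\text{other end}]}$, which has at least $\ell(P_{[\text{end},u]})+\ell(Q^*_{[w,\cdot]})+1$ edges.

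The crux is the counting: to beat $\ell$, the piece of $P$ we salvage on one side of the splice vertex plus the piece of $Q$ we salvage on one side of its splice vertex must total more than $\ell$. Since $|Z|\le k-1$ is small, the $\ge k$ neighbors of $q'_0$ on $Q$ are spread out so that some consecutive block $q'_{j-1}q'_j\cdots$ of $Q$ lies off $P$ and is ``deep'' in $Q$, i.e.\ has at least roughly $\ell-(k-1)$ vertices before it; symmetrically for the other end of $Q$, and also for the two ends of $P$ by the same argument applied with $P,Q$ swapped. Here is where the hypothesis $|Z|\le 4k-\ell-4$ enters: it forces $\ell\le 4k-|Z|-4$, and I would choose the splice vertices greedily so that the salvaged $P$-segment has length $\ge \ell-(k-1)-(\text{something})$ and similarly for $Q$, and then $4k-\ell-4 \ge |Z|$ combined with these bounds yields salvaged-length sum $>\ell$. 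The main obstacle, and the part requiring genuine care, is ensuring the $P$-detour and the $Q$-rerouting can be chosen \emph{disjoint} from each other and from the salvaged segments — this is exactly why Lemma~\ref{claim-structure} provides \emph{two} candidate endpoints off $P$ (the $q'_i$ and the $q'_{i+1}$ alternative), giving enough slack to avoid a conflicting vertex; I expect to split into cases according to how the fan-paths from the two ends interact with $Z$ and with each other, and in each case produce the long path.
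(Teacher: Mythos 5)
Your proposal correctly identifies the starting point (Lemma~\ref{claim-structure}, which supplies an edge $q_0q_i$ with both ends off $P$) and correctly senses that the Fan Lemma and the bound $|V(P)\cap V(Q)|\le k-1$ must be combined, but beyond that it is a plan rather than a proof, and the plan as stated has a genuine gap in its central step. First, the splicing construction does not produce a path: the object $P_{[\mathrm{end},u]}+(\text{detour})+Q^*_{[w,\cdot]}$ is only a path if the salvaged piece of $P$, the salvaged piece of $Q^*$, and the detour are pairwise vertex-disjoint, yet both salvaged pieces can (and in general do) contain vertices of $V(P)\cap V(Q)$, and the fan paths from $w$ may also pass near them. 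You acknowledge this (``the main obstacle\ldots requiring genuine care'') but defer it to an unspecified case analysis; this is exactly the hard part, not a technicality. Second, the quantitative heart is missing: the claim that some neighbor block of $q'_0$ is ``deep'' in $Q$ with roughly $\ell-(k-1)$ vertices on one side is unjustified (the $\ge k$ neighbors of $q'_0$ may all be close to $q'_0$, and rerouting along a chord never changes the vertex set, so it does not by itself create a long piece of $Q$ disjoint from $P$), and the hypothesis $|V(P)\cap V(Q)|\le 4k-\ell-4$ is never actually inserted into a computation that yields a path of length $>\ell$. As written, there is no point in the argument where the number $4k-\ell-3$ emerges.

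For comparison, the paper's proof does not try to build one long spliced path at all. It applies the Fan Lemma separately to the two adjacent vertices $q,q'\in V(Q)\setminus V(P)$ furnished by Lemma~\ref{claim-structure}, obtaining $k-1$ internally disjoint $(q,V(P))$-paths and $k-1$ internally disjoint $(q',V(P))$-paths, and then bounds $\ell(P)$ from below by summing, over consecutive fan feet $v_i,v_{i+1}$ on $P$, the inequality $\ell(P_{[v_i,v_{i+1}]})\ge \ell(R_i)+\ell(R_{i+1})$ (with an extra $+1$ when both feet are reached by single edges from $q$ and $q'$, using the edge $qq'$). Averaging the two resulting inequalities gives $\ell\ge 4k-3-|N_G(\{q,q'\})\cap V(P)|$. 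The second, equally essential ingredient is an injection $f$ sending each neighbor $p$ of $\{q_0,q_1\}$ on $P$ either to $p$ itself (if $p\in V(Q)$) or to its successor $p^+$ on $P$ (shown to lie on $Q$ by maximality of $Q$), which proves $|N_G(\{q_0,q_i\})\cap V(P)|\le |V(P)\cap V(Q)|$; combining the two yields the stated bound. Your sketch contains neither the segment-by-segment detour counting that produces the coefficient $4$ nor any substitute for the injection step, so it cannot be completed as written without essentially discovering these (or comparable) arguments.
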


\begin{proof} 
Let $P$ and $Q$ be longest paths in $G$ that share at most $k-1$ vertices. Note that it is enough to show that $|V(P)\cap V(Q')|\ge 4k-\ell-3$ for some longest path $Q'$ with $V(Q)=V(Q')$.
By Lemma~\ref{claim-structure}, we may assume that  $Q:q_0q_1\cdots q_\ell$ is a longest path such that 
$q_0,q_i\not\in V(P)$, $q_i$ is a neighbor of $q_0$, and either $i=1$ or $q_0q_{i+1}\in E(G)$ for some $i\in\{2,\ldots,\ell-1\}$. 
Let $P$ be an $(x, y)$-path. 

\begin{claim}\label{claim2}
For an edge $qq'\in E(G)$ such that $q,q'\in V(Q)\setminus V(P)$, it holds that
\[ \ell\ge 4k-3-  |N_G(\{ q,q'\}) \cap V(P)|.\]
\end{claim}

\begin{proof}
By the Fan Lemma (Lemma~\ref{fan}), there exists a family of $k$ internally disjoint $(q,V(P))$-paths whose terminal vertices on $P$ are distinct.
Among these $k$ paths, by ignoring a path containing $q'$, we obtain a family of $k-1$ internally disjoint $(q,V(P))$-paths $R_1, \ldots, R_{k-1}$ in $G-\{q'\}$ where each $R_i$ is a $(q, v_i)$-path, and $v_1, \ldots, v_{k-1}$ are distinct.
By the same argument, there exists a family of $k-1$ internally disjoint $(q',V(P))$-paths $R'_1, \ldots, R'_{k-1}$ in $G-\{q\}$ where each $R'_i$ is a $(q', v'_i)$-path, and $v'_1, \ldots, v'_{k-1}$ are distinct. 
See Figure~\ref{fig:(q,P)-path}.
\begin{figure}[h!]
  \centering
  \includegraphics[width=11cm, page = 1]{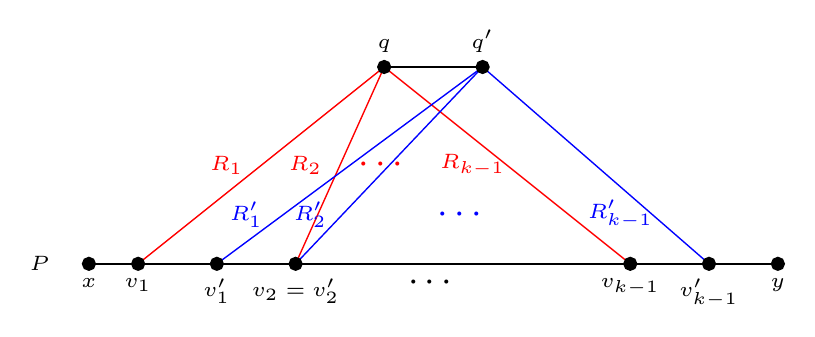}
  \caption{An illustration for $R_1, \ldots, R_{k-1}$ and $R'_1, \ldots, R'_{k-1}$}
  \label{fig:(q,P)-path}
\end{figure}

We may assume that $x,v_1,\ldots,v_{k-1},y$ lie on $P$ in this order, and $x,v'_1,\ldots,v'_{k-1},y$ also lie on $P$ in this order.

Let $I=\{ v_i\mid \ell(R_i)=1\}$ and $I'=\{ v'_i\mid \ell(R'_i)=1\}$. 
Since $P$ is a longest path, the following holds:
\begin{enumerate}[(1)]
\item Since $q'q+R_1+P_{[v_1,y]}$ is a path in $G$,  $\ell(P_{[x,v_1]}) \ge \ell(R_1)+1$. 
Similarly, since $q'q+R_{k-1}+P_{[v_{k-1},x]}$ is a path in $G$,  $\ell(P_{[v_{k-1},y]}) \ge \ell(R_{k-1})+1$.
\item If $P':P_{[x,u]}+W+P_{[w,y]}$ is a path in $G$ for some $(u,w)$-path $W$, then $\ell(P_{[u,w]})\ge \ell(W)$.
Since $R_{i}+R_{i+1}$ is a $(v_i,v_{i+1})$-path in $G$ that is internally disjoint with $P$, we obtain $\ell(P_{[v_{i},v_{i+1}]}) \ge \ell(R_{i})+\ell(R_{i+1})$.
If $v_i\in I \cap I'$, then $v_i=v'_j$ for some $j$, so $R'_{j}+q'q+R_{i+1}$ is a $(v_i,v_{i+1})$-path in $G$ that is internally disjoint with $P$, so $\ell(P_{[v_{i},v_{i+1}]}) \ge \ell(R'_{j})+\ell(R_{i+1})+1=\ell(R_{i})+\ell(R_{i+1})+1$ since $\ell(R'_j)=\ell(R_i)=1$. 
Similarly, $\ell(P_{[v_{i},v_{i-1}]}) \ge \ell(R_{i})+\ell(R_{i-1})+1$.
\end{enumerate}
Thus, we have the following:
\begin{eqnarray*}
\ell & = & \ell(P_{[x,v_1]}) + \sum_{i=1}^{k-2} \ell(P_{[v_i,v_{i+1}]}) + \ell(P_{[v_{k-1},y]})   \\
&\ge & (\ell(R_1)+1) + \sum_{i=1}^{k-2} (\ell(R_i)+\ell(R_{i+1}) ) 
 +(\ell(R_{k-1})+1)+ (|I\cap I'|-1)\\
&= & 2 \sum_{i=1}^{k-1}\ell(R_i) + |I\cap I'|+1 \\
&\ge & 2|I|+4(k-1-|I|) +   |I\cap I'|+1,  
\end{eqnarray*}
where the first inequality is from (1) and (2), and the last inequality is from the fact that  $\ell(R_i)=1$  if and only if $v_i\in I$.
By applying the same argument to $q'$, we obtain
\[ \ell \ge  2|I'|+4(k-1-|I'|) +  |I\cap I'|+1. \]
Hence, by adding the two inequalities and dividing by $2$ to each side, we have 
\[  \ell \ge -|I|-|I'|+4(k-1) + |I\cap I'|+1 = 4k-3-|I\cup I'|.\]
Since a vertex in $I\cup I'$ is a neighbor of $q$ or $q'$, we obtain
$|I\cup I'|\le |N_G(\{ q,q'\}) \cap V(P)|$, which implies the desired conclusion.
\end{proof}

By Claim~\ref{claim2}, it is sufficient to show that  \begin{eqnarray}\label{eq:last}
&&|N_G(\{ q_0,q_i\}) \cap V(P)| \le |V(Q)\cap V(P)|.
\end{eqnarray}
Note that all neighbors of an end vertex of a longest path must lie on the path. 
Thus, $N_G(q_0)\subset V(Q)$ since $q_0$ is an end vertex of $Q$. 
If $q_0q_{i+1}\in E(G)$, then  $Q_{[q_i,q_0]}+q_0q_{i+1}+Q_{[q_{i+1},q_{\ell}]}$ is also a longest path in $G$ whose vertex set is $V(Q)$, so $N_G(q_i)\subset V(Q)$. 
Hence, in this case, $N_G(\{ q_0,q_i\}) \cap V(P)$ is a subset of  $V(Q)\cap V(P)$, so \eqref{eq:last} holds.

Now suppose that $i=1$. 
For simplicity, let 
\begin{eqnarray*}
N_0&:=&N_G(\{q_0,q_1\})\cap V(P)\cap V(Q),\\
N_1&:=& (N_G(\{q_0,q_1\})\cap V(P) )\setminus V(Q).\end{eqnarray*}
Since $\{N_0,N_1\}$ is a partition of $N_G(\{q_0,q_1\})\cap V(P)$,~\eqref{eq:last} is equivalent to the following:
\[ |N_0|+ |N_1| \le |V(P)\cap V(Q)|.\]
 For every vertex $p\in V(P)$, let $p^+$ denote the neighbor of $p$ on $P$ such that $\ell(P_{[x,p]})+1 = \ell(P_{[x,p^+]})$.

Since $q_0, q_1 \notin V(P)$ by the maximality of $P$, we know $\{x,y\}$ is disjoint with $N_1$, which implies that $p^+$ is well-defined for every $p\in N_1$.
If $p^+\not\in V(Q)$ for some $p\in N_1$, then 
$p^+pq_1+Q_{[q_1,q_\ell]}$ is a path longer than $Q$, which is a contradiction.  
Thus $p^+$ is on $Q$ for every $p\in N_1$.
Hence, we have a well-defined function $f: N_0\cup N_1 \rightarrow V(P)\cap V(Q)$: 
\[ f(p) =\begin{cases}
p &\text{ if }p\in N_0,\\
p^+ &\text{ if }p \in N_1.\end{cases}\]
We complete the proof by showing that $f$ is injective. 
By the definition of $f$, it is sufficient to show that $f(N_0)$ and $f(N_1)$ are disjoint.
Suppose that $a\in f(N_0)\cap f(N_1)$, which implies that 
 $a\in N_0$ and $a=p^+$ for some $p\in N_1$.
Since $N_G(q_0)\subset V(Q)$,
$N_1=(N_G(\{q_1\})\cap V(P) )\setminus V(Q)$, and it follows that  $pq_1\in E(G)$.
If $a\in N_G(q_0)$, then replacing the edge $pp^+$ from $P$ with $pq_1q_0p^+$   gives a path longer than $P$, which is a contradiction.
If $a\in N_G(q_1)$, then replacing the edge $pp^+$ from $P$ with $pq_1p^+$ gives a path longer than $P$, which is a contradiction.
Hence,  $f$ is injective and therefore  $|N_0|+|N_1|\le |V(P)\cap V(Q)|$, and hence \eqref{eq:last} holds.
\end{proof}

From Lemma~\ref{lem:submain}, we prove Theorem~\ref{thm:main}.

\begin{proof}[Proof of Theorem~\ref{thm:main}]
Let $\ell=\ell(P)=\ell(Q)$.
By Lemma~\ref{lem:submain}, we have $|V(P)\cap V(Q)|\ge \min\{4k-\ell-3,k\}$. 
If $\ell \le 3k-3$, then $\min\{4k-\ell-3,k\}\ge k$, so suppose that $\ell \ge 3k-2$.

Now, 
$|V(P)\cap V(Q)| = |V(P)|+|V(Q)|-|V(P)\cup V(Q)|\ge 2(\ell+1)-n$.
For simplicity, let $\alpha=\min\{4k-\ell-3,k\}$ and $\beta=2\ell+2-n$. 
Together with Lemma~\ref{lem:submain}, $|V(P)\cap V(Q)|\ge \max\{\alpha,\beta\}$  and so 
\[ |V(P)\cap V(Q)|\ge \frac{ 2\alpha+\beta}{3} \ge 
 \min\left\{  \frac{2(4k-\ell-3)+(2\ell+2-n)}{3} , \frac{2k+2\ell+2-n}{3} \right\}.
 \]
Since $\ell\ge 3k-2$, it holds that $\frac{2(4k-\ell-3)+(2\ell+2-n)}{3}=\frac{8k-n-4}{3} = \frac{ 2k+2(3k-3)+2-n}{3} < \frac{2k+2\ell
+2-n}{3}$.
Hence, $ |V(P)\cap V(Q)|\ge  \frac{ 2\alpha+\beta}{3}\ge  \frac{8k-n-4}{3}$.
\end{proof}

\section{Proof of Theorem~\ref{thm:k5}} \label{sec:k=5:by Boram}

In this section, we prove Theorem~\ref{thm:k5}, which resolves Conjecture~\ref{conj} for $k = 5$ in the 
affirmative.

\begin{proof}[Proof of Theorem~\ref{thm:k5}]
Suppose to the contrary that there is a $5$-connected graph $G$ where $P$ and $Q$ are two longest paths in $G$ that share at most four vertices. 
Let $S=V(P)\cap V(Q)$.
By Theorem~\ref{thm:k=4}, we know $|S|=4$, so let $S=\{v_1,v_2,v_3,v_4\}$.
By the argument in the first paragraph of the proof of Lemma~\ref{claim-structure}, we may assume that all end vertices of $P$ or $Q$ are not in $S$.
Let $p_0$ and $p_1$ (resp. $q_0$ and $q_1$) be the ends of $P$ (resp. $Q$).
All eight vertices $p_0$, $p_1$, $q_0$, $q_1$, $v_1$, $v_2$, $v_3$, $v_4$ must be distinct.
Assume $p_0,v_1,v_2,v_3,v_4,p_1$ lie on $P$ in this order.
Then, there is a permutation $\sigma$ on $\{1,2,3,4\}$ such that $q_0,v_{\sigma(1)},v_{\sigma(2)},v_{\sigma(3)},v_{\sigma(4)},q_1$ lie on $Q$ in this order.

By the structural symmetry of $P$ and $Q$, $\sigma, \sigma^{-1}$, and $(14)(23)\sigma$ all represent the same situation. 
Therefore, we may assume that $\sigma$ is one of the following seven permutations\footnote{The following shows the equivalence: 
\begin{center}
$\sigma_1 \sim \{(1), (14)(23)\}, \hfill
\sigma_2 \sim \{(14), (23)\},  \hfill
\sigma_3 \sim \{(13), (24), (1234), (1432)\}, \hfill
\sigma_4\sim \{(12), (34), (1324), (1423)\}, 
\newline 
\sigma_5\sim  \{(123), (124), (132), (134), (142), (143), (234), (243)\},  \hfill
\sigma_6\sim  \{(12)(34), (13)(24)\}, \hfill
\sigma_7\sim  \{(1243), (1342)\}. $
\end{center}
Here, for $\sigma \sim A$, $\sigma' \in A$ if and only if 
$\sigma' = \sigma$ or $\sigma'$ can be obtained from $\sigma$ by applying a sequence of operations consisting of inverses or composing $(14)(23)$ on the left.} (See Figure~\ref{fig:sigma}.):
\[  \sigma_1=(1),  \quad \sigma_2=(23), \quad \sigma_3=(1234), \quad \sigma_4=(12), \quad \sigma_5=(134),\quad \sigma_6=(12)(34),\quad \sigma_7=(1243). \]

\begin{figure}[h!]
  \centering
  \includegraphics[width=18cm,page=2]{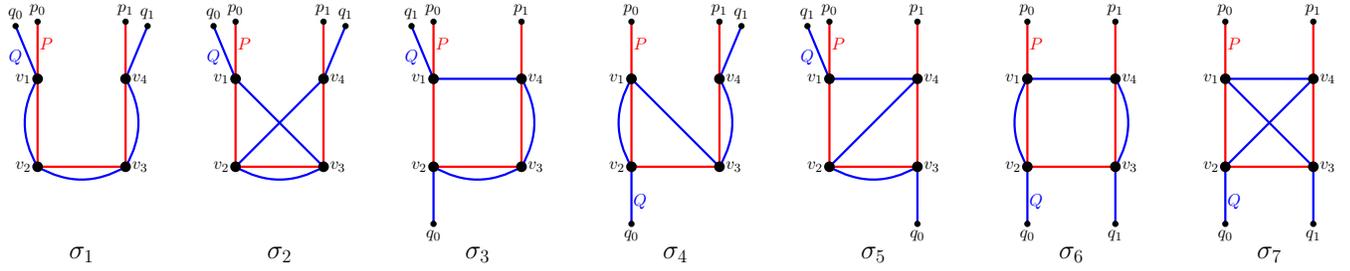}
  \caption{Illustrations for all possible seven permutations}
  \label{fig:sigma}
\end{figure}

Let $\mathcal{P}$ and $\mathcal Q$ be  the set of all components in $P-S$ and $Q-S$, respectively. 
 Define a graph $\mathcal{H}$ such that $V(\mathcal{H})=\mathcal{P}\cup \mathcal{Q}$ and $XY \in E(\mathcal{H})$ if and only if there is an $(X, Y)$-path in $G - S$  whose internal vertices are not in $V(P)\cup V(Q)$. 
For brevity,  let $P_0=P_{[p_0,v_1]}-\{v_1\}$, $P_1=P_{[v_4,p_1]}-\{v_4\}$, $Q_0=Q_{[q_0,v_{\sigma{(1)}}]}-\{v_{\sigma{(1)}}\}$, and $Q_1=Q_{[ v_{\sigma{(4)}},q_1]}-\{v_{\sigma{(4)}}\}$.
Since $p_0,p_1,q_0,q_1$ are not in $S$, each of the four paths $P_0,P_1,Q_0,Q_1$ has a vertex. 
That is, $4\le |V(\mathcal{H})|\le 10$.
Since $G$ is 5-connected, $G-S$ is connected, so $\mathcal{H}$ is connected.

Let $L$ be the subgraph of $G$ induced by the edges in $E(P)\cup E(Q)$. 
For paths $X \in \mathcal{P}$ and $Y \in \mathcal{Q}$, we say that $\{X,Y\}$ is a  \textit{replaceable} pair if one of the following holds:
\begin{enumerate}[(1)]
    \item 
    $L[V(X)\cup V(Y)\cup\{v_i,v_{i+1}\}]$ is a cycle 
    for some $i\in\{1,2,3\}$; 
    \item $\sigma(1)=1$ and $\{X,Y\}=\{P_0,Q_0\}$;   
    \item $\sigma(4)=4$ and $\{X,Y\}=\{P_1,Q_1\}$.
\end{enumerate}
For example, if $\sigma=(23)$, then  $\{P_0,Q_0\}$ and $\{P_1,Q_1\}$ are replaceable pairs.
In addition, $\{P_{(v_2,v_3)}, Q_{(v_3,v_2)}\}$ is a  replaceable pair as long as each of $P_{(v_2,v_3)}$ and $Q_{(v_3,v_2)}$ has a vertex.
In other words, if $\{X, Y\}$ is a replaceable pair, then replacing $X$ with $Y$ in $P$ becomes another longest path. 
Moreover, depending on the structural symmetry of $P$ and $Q$, there are many other longest paths that are not necessary expressed in terms of replaceable pairs. 

\begin{claim}\label{claim:XY}
For two paths $X\in \mathcal{P}$ and $Y\in \mathcal{Q}$, the following holds:
\begin{itemize}
\item[\rm (i)] 
If either $X, Y \in \{P_0,P_1, Q_0,Q_1\}$ or $L[V(X)\cup V(Y)\cup \{v_i\}]$ is a path for some $i$, then $XY\not\in E(\mathcal{H})$.
Furthermore, if $\{A,B\}$ is a replaceable pair disjoint with $\{X,Y\}$, then 
in the graph $\mathcal{H}$, each of $A$ and $B$ is adjacent to at most one vertex in $\{X,Y\}$.
\item[\rm (ii)] If $\sigma\neq \sigma_6$, then $P_0P_1\notin E(\mathcal{H})$.
\end{itemize}
\end{claim}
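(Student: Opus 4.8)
The plan is to prove both parts by exploiting the fact that a path witnessing an edge of $\mathcal{H}$ can be spliced into $P$ and $Q$ to build a longer path, contradicting maximality. For part (i), suppose $XY\in E(\mathcal{H})$, so there is an $(X,Y)$-path $R$ in $G-S$ whose internal vertices avoid $V(P)\cup V(Q)$; say $R$ runs from $u\in V(X)$ to $w\in V(Y)$. First I would handle the case where $X,Y\in\{P_0,P_1,Q_0,Q_1\}$: each of these is a terminal segment of $P$ or $Q$ containing an actual endpoint ($p_0,p_1,q_0,q_1$), and one of $X,Y$ lies on $P$ while the other lies on $Q$ (since $XY\in E(\mathcal H)$ forces one endpoint in $\mathcal P$ and one in $\mathcal Q$). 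Using $R$ together with the portion of $X$ from $u$ to the $P$-endpoint and the portion of $Y$ from $w$ to the $Q$-endpoint, one assembles a path strictly longer than $P$ or $Q$ — the key point being that $R$ adds at least the edge $uw$'s worth of length while using no vertex of $S$ or of the other path, so the concatenation is a genuine path. The case where $L[V(X)\cup V(Y)\cup\{v_i\}]$ is a path for some $i$ is similar but slightly more delicate: here $X$ and $Y$, glued at the single shared $S$-vertex $v_i$ in $L$, already form a path $X\,v_i\,Y$ (or $Y\,v_i\,X$), and prepending/appending via $R$ at the free ends together with the untouched remainders of $P$ and $Q$ outside $V(X)\cup V(Y)\cup\{v_i\}$ yields a path longer than the longer of $P,Q$; one must check that the remainders attach consistently, which follows because $v_i$ is the unique cut vertex of the relevant subgraph of $L$.

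For the "furthermore" clause of (i), suppose $\{A,B\}$ is a replaceable pair disjoint from $\{X,Y\}$ and, for contradiction, that (say) $A$ is adjacent in $\mathcal{H}$ to both $X$ and $Y$. Then there are internally-$(V(P)\cup V(Q))$-avoiding paths $R_1$ from $A$ to $X$ and $R_2$ from $A$ to $Y$; since $A$ is a path, these can be routed to meet $A$ at (at most) two distinct vertices, so $R_1+A_{[\cdot,\cdot]}+R_2$ is an $(X,Y)$-path through $A$. Now use replaceability: replacing $A$ by $B$ (or vice versa) in $P$ produces a longest path $P'$ in which $A$'s vertices are freed up, and combining this observation with the $X$–$A$–$Y$ connection lets us build a path longer than $P'$ exactly as in the first part. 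The slogan is that a replaceable pair gives us a longest path that no longer "uses" $A$ on the relevant side, and then the double adjacency of $A$ produces the forbidden configuration from the first part of (i). I expect this clause to be the main obstacle, since one has to be careful that the routing of $R_1,R_2$ into $A$ does not collide and that the replacement operation indeed detaches the portion of $L$ we need; the case analysis on whether $A,B$ are internal segments $P_{(v_i,v_{i+1})}$, $Q_{(v_j,v_{j+1})}$ or terminal segments $P_0,P_1,Q_0,Q_1$ has to be done with the definition of "replaceable pair" in hand.

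For part (ii), I would argue that if $P_0P_1\in E(\mathcal{H})$, i.e.\ there is an $(P_0,P_1)$-path $R$ in $G-S$ internally avoiding $V(P)\cup V(Q)$, then $P_{[p_0,u]}+R+P_{[w,p_1]}$ (with $u\in V(P_0)$, $w\in V(P_1)$) is a cycle $C$ containing every vertex of $P$ except those of $P_{(v_1,v_4)}\cap$(stuff) — more precisely, $C$ has length $\ell(P)+\ell(R)-\ell(P_{(u,w)}\text{ along }P)$, and because $R$ is nontrivial (it has at least one edge) and internally vertex-disjoint from $P$, a standard rotation argument turns $C$ plus any edge leaving it into a path longer than $P$ \emph{unless} $C$ is a Hamiltonian-ish obstruction — but we can instead observe that $\mathcal H$ being connected together with $P_0P_1\in E(\mathcal H)$ and the structure of $L$ for $\sigma\ne\sigma_6$ forces a longer path directly. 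Concretely, for each of the six permutations $\sigma_1,\ldots,\sigma_5,\sigma_7$ one checks (using Figure~\ref{fig:sigma}) that the cyclic order in which the $v_i$ appear on $Q$ is incompatible with closing $P$ into the cycle $C$ above while leaving $P$'s own structure realizable as a longest path; the one permutation where the orders are compatible is exactly $\sigma_6=(12)(34)$, which is why it is excluded. I would organize (ii) as a short per-permutation verification, noting that the symmetry reductions already in place (the seven representatives) keep this finite and brief. The technical heart, again, is making precise the "splice $R$ into $P$ and $Q$ to get something longer" step, which is the same mechanism used throughout and should be stated once as a sub-observation and then invoked.
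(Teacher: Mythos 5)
Your overall strategy (splice the witnessing path $R$ into $P$ and $Q$ and contradict maximality; handle the ``furthermore'' clause by replacing $A$ with $B$ to free $A$ and reduce to the first case) is the same as the paper's, but the core splicing step is not carried out correctly, and this is a genuine gap. You claim that from $R$ (running from $u\in V(X)$ to $w\in V(Y)$) together with ``the portion of $X$ from $u$ to the $P$-endpoint and the portion of $Y$ from $w$ to the $Q$-endpoint'' one assembles a single path strictly longer than $P$ or $Q$, justified by ``$R$ adds at least one edge.'' That is false: such a path discards most of $P$ and most of $Q$, and no single spliced path is guaranteed to be longer. The argument that actually works (and is what the paper does) is to build \emph{two} crossing paths, e.g.\ for $X=P_0$, $Y=Q_0$ take $P'=P_{[p_0,r]}+R+Q_{[r',q_1]}$ and $Q'=Q_{[q_0,r']}+R+P_{[r,p_1]}$ (and the analogous pair $P'=P-P_{[r,v_i]}+R+Q_{[r',v_i]}$, $Q'=Q-Q_{[r',v_i]}+R+P_{[r,v_i]}$ in the $L[V(X)\cup V(Y)\cup\{v_i\}]$-is-a-path case), observe $\ell(P')+\ell(Q')=\ell(P)+\ell(Q)+2\ell(R)$, and conclude that one of them exceeds $\ell(P)$. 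Without this length-sum bookkeeping your contradiction never materializes, and since your ``furthermore'' reduction feeds back into this first case, it inherits the same gap.

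Part (ii) is also not a proof as written. Having a $(P_0,P_1)$-path does not directly yield a longer path (both end segments lie on $P$, so the naive splice skips the whole middle of $P$), and your proposed cycle/rotation argument plus a per-permutation ``compatibility of cyclic orders'' check is never made concrete; the caveat ``unless $C$ is a Hamiltonian-ish obstruction'' signals that no actual construction is in hand. The missing idea is a reduction to (i): for $\sigma_1,\dots,\sigma_5$ one uses (after the symmetry reductions) a replaceable pair among the end segments $\{P_j,Q_k\}$ to exchange them and obtain two new longest paths, with the same vertex union and intersection $S$, in which $P_0$ and $P_1$ are end segments of \emph{different} longest paths, so (i) forbids the edge $P_0P_1$; for $\sigma_7$ one must instead exhibit the explicit rerouted longest paths $P'=P_{[p_0,v_1]}+Q_{[v_1,v_4]}+P_{[v_4,v_2]}+Q_{[v_2,q_0]}$ and $Q'=P_{[p_1,v_4]}+Q_{[v_4,v_2]}+P_{[v_2,v_1]}+Q_{[v_1,q_1]}$, for which $P'_0=P_0$ and $Q'_0=P_1$, and again apply (i). Neither of these constructions appears in your plan, so (ii) remains unproved.
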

\begin{proof}
To show (i), we use proof by contradiction, constructing a path longer than $P$ (and $Q$).

If $XY\in E(\mathcal{H})$, where either $X, Y \in \{P_0, P_1, Q_0, Q_1\}$ or $L[V(X)\cup V(Y)\cup \{v_i\}]$ is a path for some $i$, then there is an $(X,Y)$-path $R$  whose internal vertices are not in $ V(P)\cup V(Q) $.
Let $r\in X$ and $r'\in Y$ be the ends of $R$. 

If $\{A,B\}$ is a replaceable pair disjoint with $\{X,Y\}$, such that $XA, AY \in E(\mathcal{H})$ (without loss of generality, let $A \in \mathcal{P}$),
then there is a $(X,A)$-path $R$ and a $(A,Y)$-path $W$ whose internal vertices are not in $V(P)\cup V(Q)$, where $r\in V(X)$ and $r'\in V(A)$ are ends of $R$ and $w'\in V(A)$ and $w\in V(Y)$ are ends of $W$. 
See Figure~\ref{fig:parallel}. 
\begin{figure}[h!]
  \centering
  \includegraphics[width=6.5cm, page = 3]{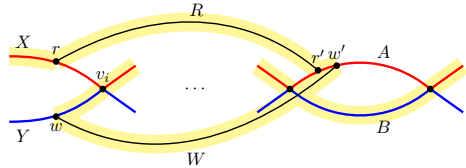}
  \caption{A replaceable pair $\{A,B\}$ such that $XA, AY \in E(\mathcal{H})$. 
  The yellow part is a path $P'$, where $L[V(X)\cup V(Y)\cup \{v_i\}]$ is a path for some $i$.}
  \label{fig:parallel}
\end{figure}
Let $R^*=R+P_{[r',w']}+W$ and $P^* = P-A+B$.
Then $P^*$ is a longest path, and $R^*$ is an $(X,Y)$-path whose internal vertices are not in $V(P^*) \cup V(Q)$, where $r \in V(X)$ and $w \in V(Y)$ are ends of $R^*$.
So, in this case, we regard $P^*$ (resp. $R^*$) as $P$ (resp. $R$).

If $X, Y \in \{P_0, P_1, Q_0, Q_1\}$
(without loss of generality, let $X = P_0$ and $Y = Q_0$), then let
\[P'=P_{[p_0,r]}+R+Q_{[r',q_1]} \qquad \text{ and } \qquad
Q'=Q_{[q_0,r']}+R+P_{[r,p_1]}. \]

If $L[V(X)\cup V(Y)\cup \{v_i\}]$ is a path for some $v_i \in S$, then let
\[P' = P-P_{[r,v_i]}+R+Q_{[r',v_i]} \qquad \text{ and } \qquad Q' = Q-Q_{[r',v_i]}+R+P_{[r,v_i]}.\]

In each case, $P'$ and $Q'$ are paths such that $\ell(P')+\ell(Q')=\ell(P)+\ell(Q)+2\ell(R)$, which leads to a contradiction, since $\max\{\ell(P'),\ell(Q')\}>\ell(P)$. 

To show (ii), if $\sigma = \sigma_i$ for some $i\in \{1,2,3,4,5\}$, then $\{P_j,Q_k\}$ is a replaceable pair for some $j, k \in \{0,1\}$ and applying (i) asserts that $P_0P_1\not\in E(\mathcal{H})$.
If $\sigma = \sigma_7$, then 
as we observed from the structural symmetry of $P$ and $Q$, we can construct other longest paths
$P' = P_{[p_0,v_1]}+Q_{[v_1,v_4]}+P_{[v_4,v_2]}+Q_{[v_2,q_0]}$ and $Q' = P_{[p_1,v_4]} + Q_{[v_4,v_2]}+P_{[v_2,v_1]}+Q_{[v_1,q_1]}$ such that $P'_0 = P_0$ and $Q'_0 = P_1$.
Thus, by (i), $P_0P_1 \notin E(\mathcal{H})$.
Hence, if it is the case where $\sigma\neq \sigma_6$, then $P_0P_1\not\in E(\mathcal{H})$.
\end{proof}

\begin{claim}\label{claim:p0}
Suppose that $\sigma=\sigma_i$ for some $i\in\{3,4,5,6,7\}$. In the graph $\mathcal{H}$, for the neighbors of $P_0$, the following holds: 
\begin{itemize}
\item[\rm (i)] If $\sigma=\sigma_i$ for some $i\in\{3,4,5,6,7\}$, then $P_0$ is not adjacent to $P_{(v_2,v_3)}$ in $\mathcal{H}$. 

\item[\rm (ii)] If $\sigma=\sigma_i$ for some $i\in\{5,6\}$, then $P_0$ is not adjacent to  $P_{(v_3,v_4)}$ in $\mathcal{H}$.  
\end{itemize}
\end{claim}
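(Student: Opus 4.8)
The plan is to prove both parts of Claim~\ref{claim:p0} by the same contradiction strategy used throughout the paper: assume the forbidden edge of $\mathcal H$ exists, and use it together with the relevant $(P_0,V(P)\cap V(Q))$-structure to stitch together a path strictly longer than $P$. Concretely, suppose $P_0$ is adjacent in $\mathcal H$ to $X$, where $X=P_{(v_2,v_3)}$ in part (i) and $X=P_{(v_3,v_4)}$ in part (ii). Then there is an $(P_0,X)$-path $R$ in $G-S$ whose internal vertices avoid $V(P)\cup V(Q)$, with ends $r\in V(P_0)$ and $r'\in V(X)$. The key observation is that $P_0$ is the initial segment of $P$ up to $v_1$, while $q_0$ (an end of $Q$) lies in one of the pieces of $Q-S$; for the permutations $\sigma_3,\dots,\sigma_7$ the piece $Q_{[q_0,v_{\sigma(1)}]}$ meets $P$ only at $v_{\sigma(1)}$, and crucially $v_{\sigma(1)}\in\{v_1,v_2,v_3,v_4\}$ appears on $P$ \emph{after} the cut point of $X$ in exactly the cases listed. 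So I would trace: start at $p_0$, walk along $P$ to $r$, jump across $R$ to $r'\in X$, then continue along $P$ from $r'$ toward $v_3$ (resp. $v_4$), and from there one can detour onto $Q$ — because the relevant $v_i$ is an interior vertex of $Q$ with a whole $Q$-segment containing $q_0$ hanging off it that $P$ has not yet used. Counting edges, this path has length $\ell(P_{[p_0,r]})+\ell(R)+\ell(P_{[r',v_i]})+(\text{a positive contribution from the }Q\text{-segment})$, which exceeds $\ell(P)=\ell$, a contradiction.

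The organizing principle is the same bookkeeping lemma behind Claim~\ref{claim:XY}: whenever we can realize $P$ and $Q$ as a single ``theta-like'' configuration in $L$ and then attach an external path $R$ disjoint from it, we get $\ell(P')+\ell(Q')=\ell(P)+\ell(Q)+2\ell(R)$ for two honest paths $P',Q'$, forcing $\max\{\ell(P'),\ell(Q')\}>\ell$. So the real content of Claim~\ref{claim:p0} is purely combinatorial: for each of the five permutations, identify which $v_i$ separates $P_0$ from the chosen piece $X$ on $P$, verify that on the $Q$-side that same $v_i$ has a nonempty pendant path reaching an end of $Q$, and check that $X$, $R$, and that pendant path are pairwise disjoint so the splice is legal. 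Part (i) handles $X=P_{(v_2,v_3)}$, where the natural separating vertex is $v_2$ or $v_3$; for $\sigma_3=(1234)$, $\sigma_4=(12)$, $\sigma_5=(134)$, $\sigma_6=(12)(34)$, $\sigma_7=(1243)$ one checks case by case (consulting Figure~\ref{fig:sigma}) which one works and that $q_0$'s $Q$-segment hangs off it. Part (ii) handles $X=P_{(v_3,v_4)}$ only for $\sigma_5$ and $\sigma_6$, where $v_3$ or $v_4$ plays that role; here I expect to use the replaceable-pair machinery of Claim~\ref{claim:XY}(i) to first rewrite $P$ into a more convenient longest path $P^*$ (exactly as in the ``we regard $P^*$ as $P$'' move in the proof of Claim~\ref{claim:XY}), so that the separating vertex sits in the right place.

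I would execute it as follows. First, for each permutation $\sigma_i$ with $i\in\{3,4,5,6,7\}$, record (from Figure~\ref{fig:sigma}) the order of $v_1,v_2,v_3,v_4$ along $Q$ and hence which $v_j$ is incident to the pendant $Q$-piece $Q_0=Q_{[q_0,v_{\sigma(1)}]}$. Second, in part (i), fix $X=P_{(v_2,v_3)}$ and the cut vertex $v_\ast\in\{v_2,v_3\}$ of $P$ that lies between $P_0$ and $X$ (namely $v_2$, since $P_0$ ends at $v_1$ and $X$ begins after $v_2$); if $v_2$ already bounds a $q$-end-bearing piece of $Q$ we splice directly, and otherwise we first apply a replaceable pair to relabel. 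Third, assuming $P_0X\in E(\mathcal H)$ with external path $R$, form $P':=P_{[p_0,r]}+R+P_{[r',v_\ast]}+(\text{the }Q\text{-segment from }v_\ast\text{ to }q_0)$ — checking disjointness of the three pieces other than at shared endpoints — and observe $\ell(P')>\ell$, the contradiction. Fourth, repeat verbatim for part (ii) with $X=P_{(v_3,v_4)}$, $v_\ast\in\{v_3,v_4\}$, and $\sigma\in\{\sigma_5,\sigma_6\}$. The main obstacle I anticipate is \emph{disjointness}: the external path $R$, the chosen $P$-segments, and the borrowed $Q$-segment must not overlap except at the prescribed endpoints, and this can fail if the pendant $Q$-piece we want to borrow actually passes through $X$ or through $P_0$; handling that requires either choosing the other candidate cut vertex $v_\ast$ or first invoking Claim~\ref{claim:XY}(i) to replace $P$ by $P^*$, and making sure that after this replacement $P_0$ is literally unchanged (which the definition of replaceable pair, being disjoint from $P_0$, guarantees). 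Everything else is a finite, figure-checkable case analysis.
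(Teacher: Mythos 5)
Your proposal names the right organizing principle in passing (build \emph{two} paths $P'$, $Q'$ with $\ell(P')+\ell(Q')=\ell(P)+\ell(Q)+2\ell(R)$, so $\max\{\ell(P'),\ell(Q')\}>\ell$), but what you actually execute is a single-path argument, and its key inequality is unjustified. The path you form, $P'=P_{[p_0,r]}+R+P_{[r',v_\ast]}+(\text{the pendant $Q$-segment from }v_\ast\text{ to an end of }Q)$, discards the entire tail $P_{[v_\ast,p_1]}$ of $P$, the portion of $P_0$ beyond $r$, and the stretch of $P$ between $v_1$ and the component $X$; the borrowed $Q$-segment can be arbitrarily short compared with what was thrown away, so ``observe $\ell(P')>\ell$'' simply does not follow. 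This is exactly why the paper never argues with one path here: in every case it exhibits a \emph{pair} $P',Q'$ whose edge sets together cover each edge of $P$ and of $Q$ exactly once plus $R$ twice, and only the sum identity yields the contradiction. For instance, for $\sigma\in\{\sigma_3,\sigma_4,\sigma_6,\sigma_7\}$ in part (i) the paper takes $P'=P_{[p_0,r]}+R+P_{[r',v_2]}+Q_{[v_2,q_1]}$ together with the complementary $Q'=Q_{[q_0,v_2]}+P_{[v_2,r]}+R+P_{[r',p_1]}$; note the attached $Q$-piece runs all the way to $q_1$ through the remaining $v_j$'s, not just to the pendant piece containing $q_0$, and the complementary path is what makes the count work.

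The second, related gap is that the genuine content of this claim is precisely the case-by-case construction of these complementary pairs, which you defer to ``a finite, figure-checkable case analysis'' with a generic recipe (splice in the pendant $Q_0$ at a separating vertex $v_\ast$, possibly after a replaceable-pair swap). That recipe does not produce valid complementary pairs in the harder cases: for $\sigma_5$ in part (i) the paper's paths weave between $P$ and $Q$ several times, e.g.\ $P'=P_{[p_0,r]}+R+P_{[r',v_4]}+Q_{[v_4,v_2]}+P_{[v_2,v_1]}+Q_{[v_1,q_1]}$, and for $\sigma_6$ in part (ii) the splice uses the interior segment $Q_{[v_1,v_4]}$ rather than any pendant piece at all. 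So both the inequality you rely on and the constructions you would need in its place are missing; as written the argument does not establish Claim~\ref{claim:p0}.
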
 
\begin{proof}  Suppose that $P_0$ is adjacent to  $Y$ in $\mathcal{H}$ for some $Y\in \{P_{(v_2,v_3)},P_{(v_3,v_4)}\}$. Then there is an $(P_0, Y)$-path $R$ whose internal vertices are not in $V(P)\cup V(Q)$, where $r\in V(P_0)$ and $r'\in V(Y)$ are ends of $R$. 
In each case of the following, we construct paths $P'$ and $Q'$ such that $\ell(P')+\ell(Q')=\ell(P)+\ell(Q)+2\ell(R)$, which leads to a contradiction, since $\max\{\ell(P'),\ell(Q')\}>\ell(P)$. See Figures~\ref{fig:p23} and~\ref{fig:p34}, where $P'$ is a yellow path in each figure.

\begin{figure}[h!]
  \centering
  \includegraphics[width=16cm, page = 4]{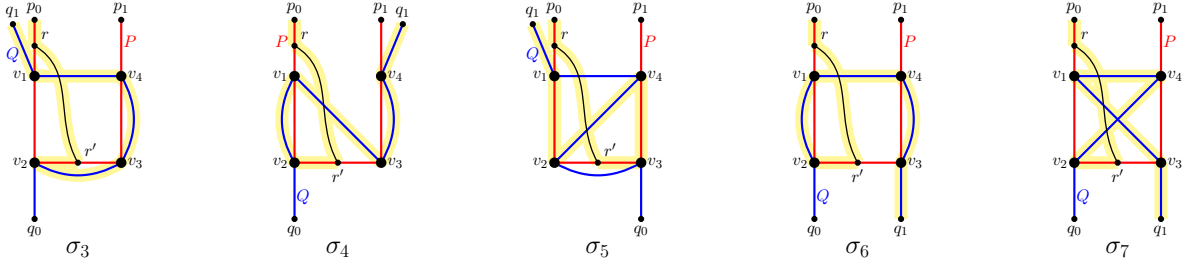}
  \caption{Illustrations for the proof of  Claim~\ref{claim:p0} (i)}
  \label{fig:p23}
\end{figure}

To check (i), let $Y=P_{(v_2,v_3)}$, $i\in\{3,4,5,6,7\}$, and see Figure~\ref{fig:p23}.
If $\sigma=\sigma_i$, where $i \in \{3,4,6,7\}$, then let
\[ P'=P_{[p_0,r]}+R+P_{[r',v_2]}+Q_{[v_2,q_1]}, \qquad Q'=Q_{[q_0,v_2]}+P_{[v_2,r]}+R+P_{[r',p_1]}.\]
If  $\sigma=\sigma_5$, then let 
\[P'=P_{[p_0,r]}+R+P_{[r',v_4]}+Q_{[v_4,v_2]}+P_{[v_2,v_1]} + Q_{[v_1,q_1]},\] \[Q'=Q_{[q_0,v_2]}+P_{[v_2,r']}+R+P_{[r,v_1]}+Q_{[v_1,v_4]}+P_{[v_4,p_1]}.\]

To check (ii), let $Y=P_{(v_3,v_4)}$, $i\in\{5,6\}$, and see Figure~\ref{fig:p34}. 
If $\sigma=\sigma_5$, then let
\[ P'=P_{[p_0,r]}+R+P_{[r',v_2]}+Q_{[v_2,q_1]}, \qquad Q'=Q_{[q_0,v_2]}+P_{[v_2,r]}+R+P_{[r',p_1]}.\]
If $\sigma=\sigma_6$, then let
 \[ P'=P_{[p_0,r]}+R+P_{[r',v_1]}+Q_{[v_1,v_4]}+P_{[v_4,p_1]}, \qquad Q'=Q_{[q_0,v_1]}+P_{[v_1,r]}+R+P_{[r',v_4]}+Q_{[v_4,q_1]}.\]
This completes the proof of the claim.
\begin{figure}[h!]
  \centering
  \includegraphics[width=7cm, page = 5]{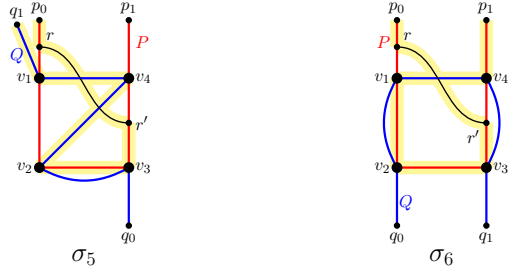}
  \caption{Illustrations for the proof of  Claim~\ref{claim:p0}~(ii)}
  \label{fig:p34}
\end{figure}
\end{proof}

We will reach a contradiction by showing that  $\sigma=\sigma_i$ is impossible for each $i\in \{1,\ldots,7\}$.  
First, suppose that $\sigma=\sigma_i$ for some $i\in \{3,4,5,6,7\}$. 
We will list all possible elements in $N_{\mathcal{H}}(P_0)$.  
By Claim~\ref{claim:XY} and the structural symmetry of $P$ and $Q$, we obtain the following:
\[ N_{\mathcal{H}}(P_0) \subseteq 
\begin{cases} 
\{P_{(v_2,v_3)}, Q_{(v_2,v_3)}, P_{(v_3,v_4)}, Q_{(v_3,v_4)}\} & \text{ if }\sigma=\sigma_3 
,\\
\{P_{(v_2,v_3)}, P_{(v_3,v_4)}, Q_{(v_3,v_4)}\} &\text{ if }\sigma=\sigma_4
, \\
\{P_{(v_2,v_3)}, Q_{(v_2,v_3)}, Q_{(v_2,v_4)}, P_{(v_3,v_4)}\} &\text{ if }\sigma=\sigma_5
, \\
\{P_{(v_2,v_3)}, P_{(v_3,v_4)},Q_{(v_3,v_4)}, P_1\} &\text{ if }\sigma=\sigma_6,\\
\{P_{(v_2,v_3)}, Q_{(v_2,v_4)}, P_{(v_3,v_4)}\} & \text{ if }\sigma=\sigma_7. 
\end{cases} \]
In addition, by the structural symmetry of $P$ and $Q$, we may assume the following:
\begin{itemize}
\item[-] If $i\in \{3,7\}$, then $P_0P_{(v_2,v_3)}\in E(\mathcal{H})$.
\item[-] If $i\in\{4,5\}$, then $P_0A \in E(\mathcal{H})$  for some  $A\in \{ P_{(v_2,v_3)}, P_{(v_3,v_4)}\}$.
\item[-] If $i=6$, then $P_0A \in E(\mathcal{H})$  for some  $A\in \{ P_{(v_2,v_3)}, P_{(v_3,v_4)},P_1\}$.
\end{itemize}
To be precise, when $\sigma=\sigma_3$, by the structural symmetry of $P$ and $Q$, the four paths $P_{(v_2,v_3)}$, $P_{(v_3,v_4)}$, $Q_{(v_2,v_3)}$, $Q_{(v_3,v_4)}$ play the same role in a configuration in Figure~\ref{fig:sigma}, so we may assume that $P_0$ and $P_{(v_2,v_3)}$ are adjacent in $\mathcal{H}$.
When $\sigma=\sigma_7$, we can replace $P$ and $Q$ by other two longest paths $P'$ and $Q'$ with  $V(P')\cap V(Q')=S$ and $E(P)\cup E(Q)=E(P')\cup E(Q')$, 
where $P' = P_{[p_0,v_1]} + Q_{[v_1,v_4]}+ P_{[v_4,v_2]} + Q_{[v_2,q_0]}$ and $Q' = P_{[p_1,v_4]}+ Q_{[v_4,v_2]} + P_{[v_2,v_1]}+Q_{[v_1,q_1]}$, 
Therefore we may assume that $P_0$ is adjacent to $P_{(v_2,v_3)}$ in $\mathcal{H}$. 
The other cases are similarly obtained.

From Claim~\ref{claim:p0}, we immediately reach a contradiction when $\sigma=\sigma_i$ for some $i\in\{3,5,7\}$. 
Suppose that $\sigma = \sigma_4$.
By Claim~\ref{claim:p0}, $P_0 P_{(v_2,v_3)} \not\in E(\mathcal{H})$, so a path in $\mathcal{H}$ from $P_0$ to $Q_0$ should start with $P_0$, $P_{(v_3,v_4)}$. 
By Claim~\ref{claim:XY} (i), there is no possible neighbor of $P_{(v_3,v_4)}$ in $\mathcal{H}$ from the replaceable pairs of $\sigma_4$. 
Suppose that $\sigma=\sigma_6$. 
By Claim~\ref{claim:p0}, $P_{(v_2,v_3)}$ and $P_{(v_3,v_4)}$ are not adjacent to $P_0$,
 so we have $N_{\mathcal{H}}(P_0)=\{P_1\}$. This is impossible since, by symmetry in $\sigma_6$, $N_{\mathcal{H}}(P_1)=\{P_0\}$, and this is a contradiction to the fact that $\mathcal{H}$ is connected.

Now suppose that $\sigma=\sigma_i$ for some $i\in \{1,2\}$.   
Note that each of three pairs $\{P_0,Q_0\}$, $\{P_1,Q_1\}$, $\{P_{(v_2,v_3)},Q_{(v_2,v_3)}\}$ is replaceable. 
We take a shortest path $\mathcal{R}$ in $\mathcal{H}$ from $\{P_0,Q_0\}$ to $\{P_1,Q_1\}$.
By symmetry, we may assume that the ends of $\mathcal{R}$  are $P_0$ and $P_1$.
By Claim~\ref{claim:XY}, 
$\mathcal{R}$ has length at least two,
and $\mathcal{H}$ is a subgraph of a graph in Figure~\ref{fig:H}.
\begin{figure}[h!]
  \centering
  \includegraphics[width=12cm, page = 6]{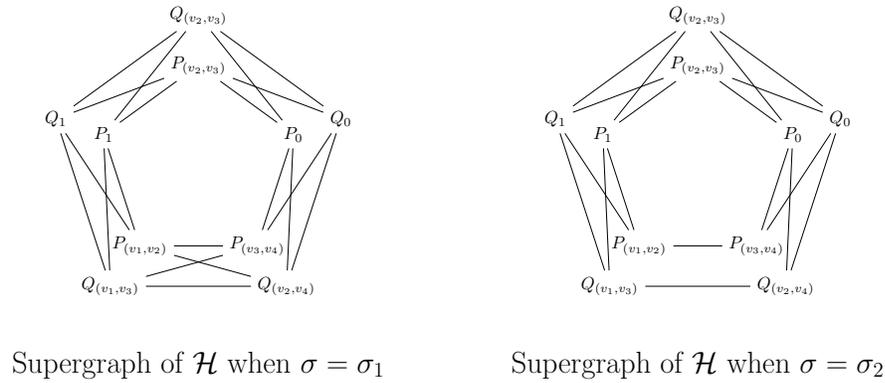}
  \caption{Illustrations for graphs containing $\mathcal{H}$.}
  \label{fig:H}
\end{figure}
By the structural symmetry of $P$ and $Q$,  the 
internal vertices of $\mathcal{R}$ are contained in $\mathcal{P} \setminus \{P_0,P_1\}$.
From $\mathcal{R}$, we can find a $(P_0,P_1)$-path $R$ in $G$ whose internal vertices are not in $V(Q) \cup V(P_0) \cup V(P_1)$, where $u \in V(P_0)$ and $w \in V(P_1)$ are ends of $R$.
Consider a longest path $P^*=P_{[p_0,v_1]}+Q_{[v_1,v_4]}+P_{[v_4,p_1]}$, and note that $R$ is a $(P^*_0,P^*_1)$-path in $G-S$ whose internal vertices are in $V(G) \setminus ( V(P^*) \cup V(Q))$.
Then $P'$ and $Q'$ are paths such that $\ell(P') + \ell(Q') = \ell(P) + \ell(Q) + 2\ell(R)$, where
\[P' = P^*_{[p_0,u]} + R + P^*_{[w,v_1]}+Q_{[v_1,q_0]}, \qquad  Q' = P^*_{[p_1,w]} + R + P^*_{[u,v_4]}+Q_{[v_4,q_1]}.\]
Thus, either $P'$ or $Q'$ is a path longer then a longest path, which is a contradiction.
\end{proof}

\section{Concluding remarks}\label{sec:remarks}

We extended the results of~\cite{gutierrez2020intersection} to provide further evidence for Conjecture~\ref{conj}. 
It is worth mentioning that natural generalizations for Claims~\ref{claim:XY} and~\ref{claim:p0} can be easily verified, so developing similar additional structural claims should be useful for proving Conjecture~\ref{conj} for $k\geq 6$.

In \cite{chen1998intersections}, it was shown that any two longest cycles in a $k$-connected graph share at least $ck^{3/5}$ vertices, where $c = 1/(\sqrt[3]{256}+3)^{3/5} \approx 0.2615$. By following the argument in \cite{chen1998intersections}, one can obtain the same conclusion for two longest paths in a $k$-connected graph; that is, every two longest paths in a $k$-connected graph have at least common $ck^{3/5}$  vertices.
As far as we know, this is the best known result so far, so any improvement on the growth rate would be interesting. 

\section*{Acknowledgements}
Eun-Kyung Cho was supported by Basic Science Research Program through the National Research Foundation of Korea(NRF) funded by the Ministry of Education(2020R1I1A1A0105858711).
Ilkyoo Choi was supported by the Basic Science Research Program through the National Research Foundation of Korea (NRF) funded by the Ministry of Education (NRF-2018R1D1A1B07043049), and also by the Hankuk University of Foreign Studies Research Fund.
Boram Park was supported by Basic Science Research Program through the National Research Foundation of Korea (NRF) funded by the Ministry of Science, ICT and Future Planning (NRF-2018R1C1B6003577).

\bibliographystyle{plain}

\end{document}